\newtheorem{assumption}{Assumption}[section]
\definecolor{orange}{rgb}{1, 0.5, 0}
\definecolor{darkspringgreen}{rgb}{0.09, 0.45, 0.27}
\renewcommand{\chi}{\mathrm{I}}
\DeclareMathOperator*{\argmin}{argmin}
\DeclareMathOperator*{\argmax}{argmax}
\newcommand{\R}{\mathbb R}
\newcommand{\TR}{\mathrm{tr}}
\def\eps{\varepsilon}
\newcommand{\D}{\mathbb D}
\newcommand{\Sym}{\mathrm{Sym}}
\def\diag{{\rm diag}}
\def\diagvec{{\rm diagvec}}
\begin{document}

\title{Improving the robustness of neural ODEs with minimal weight perturbation}


\author{\name Arturo De Marinis \email arturo.demarinis@gssi.it \\
    \addr Division of Mathematics \\
    Gran Sasso Science Institute \\ L'Aquila, Italy
    \AND
    \name Nicola Guglielmi \email nicola.guglielmi@gssi.it \\
    \addr Division of Mathematics\\ Gran Sasso Science Institute\\ L'Aquila, Italy
    \AND
    \name Stefano Sicilia \email stefano.sicilia@umons.ac.be \\
    \addr Department of Mathematics and Operational Research\\ University of Mons\\ Mons, Belgium
    \AND
    \name Francesco Tudisco \email f.tudisco@ed.ac.uk \\
    \addr Maxwell Institute \& School of Mathematics\\ The University of Edinburgh\\ Edinburgh, United Kingdom
}

\editor{My editor}

\maketitle

\begin{abstract}
 We propose a method to enhance the stability of a neural ordinary differential equation (neural ODE) by reducing the maximum error growth subsequent to a perturbation of the initial value. Since the stability depends on the logarithmic norm of the Jacobian matrix associated with the neural ODE, we control the logarithmic norm by perturbing the weight matrices of the neural ODE by a smallest possible perturbation (in Frobenius norm). We do so by engaging an eigenvalue optimisation problem, for which we propose a nested two-level algorithm. For a given perturbation size of the weight matrix, the inner level computes optimal perturbations of that size, while - at the outer level - we tune the perturbation amplitude until we reach the desired uniform stability bound. We embed the proposed algorithm in the training of the neural ODE to improve its robustness to perturbations of the initial value, as adversarial attacks. Numerical experiments on classical image datasets show that an image classifier including a neural ODE in its architecture trained according to our strategy is more stable than the same classifier trained in the classical way, and therefore, it is more robust and less vulnerable to adversarial attacks.
\end{abstract}

\begin{keywords}
Neural ODEs, adversarial robustness, stability, deep learning, dynamical systems
\end{keywords}

\section{Introduction}

Neural ordinary differential equations (neural ODEs) \citep{haber2017stable,chen2018neural} model the forward pass of a neural network as the evolution of a feature vector \( x(t) \) through a continuous-time dynamical system of the form
\begin{equation}\label{eq:neuralODE}
    \dot{x}(t) := \dv{}{t}x(t) = f(x(t), t, \theta), \quad t\in[0,T],
\end{equation}               
where \( x(t) \in \mathbb{R}^n \) represents the state, and \( f \) is a neural network parameterised by \( \theta \). This formulation generalises residual and recurrent neural networks and has been extensively explored in modern machine learning, achieving notable success across diverse applications with various choices of \( f \), including second-order damped oscillators \citep{rusch2021coupled}, state-space models \citep{gu2020hippo,gu2023mamba}, diffusion-based generative models \citep{song2021score,karras2022elucidating}, and graph neural networks \citep{chamberlain2021grand,nguyen2024coupled}.

In this article we shall mainly consider one-layer weight-tied neural ODEs of the form (\cite{chen2018neural})
\begin{equation*}\label{eq:neuralODE1}
    \dot{x}(t) = \sigma (A x(t) + b), \quad t\in[0,T],
\end{equation*}
where $\sigma:\R\to\R$ is a smooth activation function applied entry-wise, $T>0$ is the time horizon, $A\in\R^{n\times n}$ is the weight matrix and $b\in\R^n$ is the bias vector. A composition of functions $\sigma (A_i x + b_i)$, $i=1,2,\ldots$, allows to obtain a multi-layer neural ODE.

A neural ODE of the form \eqref{eq:neuralODE} naturally leads to discrete architectures through numerical integration. For instance, applying the Euler method to \eqref{eq:neuralODE} over a partition \mbox{$0=t_0< t_1 < \dots < t_N =T$} of $[0,T]$ with step size $ h = T/N $ results in a residual neural network \citep{ResNet}
\[
    x_{k+1} = x_k + h f(x_k, t_k, \theta), \quad k=0,1,\dots,N-1,
\]
where $ x_k \approx x(t_k) $, and the number of discretisation points $N$ determines the depth of the neural network.

One of the key advantages of formulating neural networks via ODEs is the ability to study their theoretical properties through dynamical systems analysis, particularly in terms of stability, contractivity and conservation laws \citep{celledoni2021structure}. These properties are crucial for understanding the robustness of deep learning models, particularly against perturbations in the input data.

In this work, we focus on designing deep neural networks as discretisations of neural ODEs that are inherently robust against data perturbations, including adversarial attacks, carefully crafted perturbations in input designed to mislead the network into making incorrect predictions \citep{biggio2013evasion,szegedy2013intriguing,goodfellow2014explaining}. To analyse robustness, we consider the sensitivity of the solution of \eqref{eq:neuralODE} to variations in the initial condition. For a given norm $\|\cdot\|$ on $\R^n$, under suitable Lipschitz assumptions on the vector field $f$, a neural ODE satisfies the bound
\begin{equation}\label{eq:stab_cond}
    \|x_1(T)-x_2(T)\| \leq C \|x_1(0)-x_2(0)\|,
\end{equation}
for a constant \( C > 0 \),  and any two solutions \( x_1(t) \) and \( x_2(t) \) corresponding to initial conditions \( x_1(0) \neq x_2(0) \). This constant \( C \) governs the stability behaviour of the system:
\begin{itemize}
    \item if \( C \gg 1 \), small perturbations in input may amplify significantly, leading to instability;
    \item if \( C \) is moderate, the neural ODE is stable;
    \item if \( C < 1 \), the neural ODE is contractive, meaning that perturbations decay over time.
\end{itemize}

From a numerical ODE point of view, a key challenge is ensuring that numerical discretisations of \eqref{eq:neuralODE} preserve these stability properties \citep{dahlquist1979generalized}. Assuming this is the case, by designing neural ODE models that maintain stability or contractivity, we can enhance robustness against input perturbations, including adversarial ones. This can be achieved through explicit regularisation in the loss function \citep{yan2020robustness, kang2021stable, li2022defending} or by imposing specific structural constraints on the network parameters \citep{haber2017stable,celledoni2021structure,guglielmi2024contractivity}. 

However, enforcing strong stability or contractivity can degrade model accuracy. In classification tasks, requiring contractivity may significantly reduce the fraction of correctly classified test samples. This trade-off is inevitable: overly accurate models tend to be unstable \citep{gottschling2020troublesome}, while strongly stable models may lack expressiveness. Consequently, an optimal balance must be found, where \( C > 1 \) remains moderate, ensuring both accuracy and robustness.

Building on this insight, we propose an optimisation strategy that optimises the network parameters to balance accuracy and stability. Specifically, we formulate an optimisation problem where we seek the closest (structured) perturbation of the weight matrices that enforces a desired stability bound for \( C \). This approach allows us to explicitly control the Lipschitz constant while maintaining high predictive performance. 

We validate our method through a range of numerical experiments against existing baseline methods to stabilise neural ODEs on standard classification benchmarks, demonstrating that our approach effectively improves robustness while preserving competitive accuracy, outperforming existing approaches.

\subsection{Paper organisation}

The paper is organised as follows. In Section \ref{sec:stab} the stability of neural ODEs is studied from a theoretical point of view; contractivity in the $2$-norm is recalled and adapted to the considered problem. Next, a two-level algorithm is proposed, with the aim of modifying the matrix $A$ in order to reach the stability goal for the solution of the neural ODE. The two-level approach, taken into account here, uses an {\it inner iteration} to compute the solution of a suitable eigenvalue optimisation problem associated to the required stability property (which requires the uniform control of the logarithmic norm of the matrix $A$ premultiplied by a diagonal matrix), where the allowed perturbations of $A$ have a fixed perturbation size $\eps$. Then the optimal perturbation size $\eps_\star$ is determined in an {\it outer iteration}, that is the perturbation of minimal size that allows to obtain the desired stability bound.   
Section \ref{sec:inner} describes the inner iteration of the two-level algorithm, and Section \ref{sec:outer} discusses the solution of the outer iteration. Section \ref{sec:num_exp} shows how the proposed algorithm is embedded in the training of the neural ODE to improve its stability and robustness to adversarial attacks. Numerical experiments on MNIST, FashionMNIST, CIFAR10 and SVHN datasets classification are reported. Finally, Section \ref{sec:conc} draws our conclusions.

\subsection{Existing literature on the stability of neural ODEs}

Many papers have explored the idea of interpreting and studying deep residual neural networks as discretisations of continuous-time ordinary differential equations (ODEs). Early works in this direction include \citep{haber2017stable, weinan2017proposal, chen2018neural, lu2018beyond, ruthotto2020deep}, with \cite{chen2018neural} popularising the term \textit{Neural Ordinary Differential Equation}. As noted already in \citep{haber2017stable}, tools from the stability theory of ODEs can be highly effective for analysing and enhancing the adversarial robustness of residual networks modelled as discretisations of continuous-time neural ODEs. Since then, the literature on this topic has expanded significantly, with many recent works focusing on the stability properties of neural ODEs.

\cite{carrara2019robustness} analyse the robustness of image classifiers implemented as neural ODEs against adversarial attacks and compare their performance to standard deep models. They empirically demonstrate that neural ODEs are inherently more robust to adversarial attacks than state-of-the-art residual networks.

\cite{yan2020robustness} reach similar conclusions. They offer a deeper understanding of this phenomenon by leveraging a desirable property of the flow of a continuous-time ODE: integral curves are non-intersecting. For three initial points, the integral curve starting from the middle point is always sandwiched between the integral curves starting from the others. This imposes a bound on the deviation from the original output caused by a perturbation in input. In contrast, state-of-the-art residual networks lack such bounds on output deviation, making neural ODEs intrinsically more robust.

\cite{li2020implicit} investigate the vulnerability of neural networks to adversarial attacks from the perspective of dynamical systems. By interpreting a residual network as an explicit Euler discretisation of an ordinary differential equation, they observe that the adversarial robustness of a residual network is linked to the numerical stability of the corresponding dynamical system. Specifically, more stable numerical schemes may lead to more robust deep networks. Since implicit methods are more stable than explicit ones, they propose using the implicit Euler method instead of the explicit one to discretise the neural ODE, resulting in a more robust model.

\cite{carrara2021defending, carrara2022improving} and \cite{caldelli2022tuning} demonstrate that adversarial robustness can be enhanced by operating the network in different tolerance regimes during training and testing. Notably, robustness is further improved when the test-time tolerance is greater than the training-time tolerance, which is typically the same value used by the adversarial attacker.

\cite{kang2021stable} and \cite{li2022defending} propose a Stable neural ODE with Lyapunov-stable equilibrium points for Defending against adversarial attacks (SODEF). By ensuring that the equilibrium points of the ODE are Lyapunov-stable, and by minimising the maximum cosine similarity between them, the solution for an input with a small perturbation converges to the same solution as the unperturbed initial input.

\cite{huang2022adversarial} introduce a provably stable architecture for neural ODEs that achieves non-trivial adversarial robustness even when trained without adversarial examples. Drawing inspiration from dynamical systems theory, they design a stabilised neural network whose neural ODE blocks are not symmetric, proving it to be input-output stable.

\cite{cui2023robustness}, motivated by a family of weight functions used to enhance the stability of dynamical systems, propose a novel activation function called half-Swish. Numerical experiments show that their model can achieve superior performance on robustness both against stochastic noise images and
adversarial attacks, while maintaining competitive results of classification accuracy over the clean testing data.

\cite{purohit2023ortho} proposes to use orthogonal convolutional layers to define the neural network dynamics of the neural ODE. This allows to upper bound the Lipschitz constant of the dynamics, making the model robust. Indeed, the representations of clean samples and their adversarial counterparts remain close.

\cite{haber2017stable}, \cite{ruthotto2020deep}, \cite{celledoni2023dynamical} and \cite{sherry2024designing} introduce neural ODEs whose architectures and parameters are structured. The structure is designed in such a way that the deep neural networks, resulting from their numerical discretisations, are inherently non-expansive, making them robust to adversarial attacks by construction.

\section{Stability of neural ODEs} \label{sec:stab}

As in \citep{kang2021stable} and  \citep{li2022defending}, our goal is to make a neural ODE stable. In order to explain our methodology, we start by considering a one-layer weight-tied neural ODE  
\begin{equation}\label{eq:1lnODE}
    \dot{x}(t) = \sigma(Ax(t)+b), \quad t\in[0,T],
\end{equation}
where $x(t)\in\R^n$ is the feature vector evolution function, $A\in\R^{n\times n}$ and $b\in\R^n$ are the parameters, and $\sigma:\R\to\R$ is a smooth activation function applied entry-wise. Later, we will provide details on how to extend to the two-layer weight-tied neural ODE
\[
    \dot{x}(t) = \sigma(A_2\sigma(A_1x(t)+b_1)+b_2), \quad t\in[0,T],
\]
which then directly transfer to the general multi-layer case.

We require that $\sigma'(\R)\subseteq[m,1]$, where $0<m\le1$ is a parameter. For most activation functions, $\sigma'(\R)\subseteq[0,1]$, but - to our aims - we assume $\sigma'(x) \ge m > 0$, for all $x\in\R$. This is easily achievable by means of a slight modification of the activation function without affecting the approximation capabilities of the neural ODE.

For any pair of solutions $x_1(t)$ and $x_2(t)$, corresponding to different initial data $x_1(0)$ and $x_2(0)$, our goal is to obtain an a priori bound of the form \eqref{eq:stab_cond}, i.e. 
\[
    \|x_1(t)-x_2(t) \|  \le C \|x_1(0)-x_2(0) \| , \quad t\in[0,T],
\]
for a moderately sized constant $C>0$. By denoting by $\|\cdot\|$ and $\langle \cdot , \cdot \rangle$ the $2$-norm and its associated standard inner product, and $f(t,x)=\sigma(Ax+b)$, the stability condition above is satisfied if the one-sided Lipschitz condition holds   
\begin{equation}\label{eq:1sLcond}
    \langle f(t,x) - f(t,y) , x - y \rangle \le \delta_\star \| x - y  \| ^2, \quad t\in[0,T],\ x,y\in\R^n,
\end{equation}
for a certain  $\delta_\star=\delta_\star(A)\in\R$ , which implies $C=e^{\delta_\star T}$.
 
In our setting, we characterise the constants $\delta_\star$ and $C$ by means of the logarithmic norm of the Jacobian of the vector field $f(t,x)$. In particular we consider the logarithmic 2-norm of a matrix $M\in \R^{n\times n}$, that is defined as
\begin{equation}\label{eq:l2n}
    \mu_2(M) = \lambda_{\max}(\Sym(M)) := \max_{i=1,\dots,n} \lambda_i(\Sym(M)),
\end{equation}
where $\lambda_i(\Sym(M))$ denotes the $i$-th eigenvalue of the symmetric part of $M$.

Then, the one-sided Lipschitz condition \eqref{eq:1sLcond} becomes
\begin{align*}
    &\langle \sigma(Ax+b) - \sigma(Ay+b) , x - y \rangle = \langle J (x-y) , x-y \rangle = (x-y)^\top J(x-y) \\
    &= (x-y)^\top \Sym(J) (x-y),
\end{align*}
where $J$ is the Jacobian matrix of  $\sigma(A\cdot +b)$ evaluated at $\xi_\star$, 
\[
    J=\dv{}{x} \sigma(Ax+b)\bigg|_{x=\xi_\star}\in\R^{n\times n},
\]
where $\xi_\star=(1-t_\star)x+t_\star y$ for some $t_\star\in[0,1]$, as provided by the mean value theorem. We note that, defined $\D^{n\times n}$ as the set of diagonal matrices of size $n$ and
\[
 \Omega_m = \{ D\in \D^{n\times n}\ :\ m \le D_{ii} \le 1,\ \forall i = 1,\dots,n\},
\]
the Jacobian $J$ takes the form
\[
    J = \diag(d)A:= DA,
\]
where $d=\sigma'(A\xi_\star+b)$ and - for a vector $v \in \R^n$  - $\diag(v)$ is the diagonal matrix whose diagonal is the vector $v$. 
Furthermore, we note that writing $x-y$ in terms of $u_1,\dots,u_n\in\R^n$, the orthonormal eigenvectors of $\Sym(J)$,
\[
    x-y=\sum_{i=1}^n\alpha_i u_i, \qquad \alpha_i\in\R,
\]
yields
\begin{align*}
    &(x-y)^\top \Sym(J) (x-y) = \sum_{i=1}^n \alpha_i u_i^\top \sum_{j=1}^n \alpha_j \Sym(J) u_j = \sum_{i=1}^n \lambda_i(\Sym(J)) \alpha_i^2 \\
    &\le \left( \max_{i=1,\dots,n} \lambda_i(\Sym(J)) \right) \sum_{i=1}^n \alpha_i^2 = \mu_2(J) \|x-y \| ^2.
\end{align*}
By combining all together, we get
\begin{equation*}
    \langle \sigma(Ax+b) - \sigma(Ay+b) , x - y \rangle \le \mu_2(J) \|x-y \| ^2 \\
    \le \max_{D\in\Omega_m} \mu_2(DA) \|x-y \| ^2 := \delta_\star \|x-y \| ^2,
\end{equation*}
i.e. the neural ODE \eqref{eq:1lnODE} satisfies the one-sided Lipschitz condition \eqref{eq:1sLcond} with
\begin{equation}\label{eq:nu}
    \delta_\star := \max_{D\in\Omega_m} \mu_2(DA)
\end{equation}
and thus the stability condition \eqref{eq:stab_cond} with $C=e^{\delta_\star T}$. According to the sign of $\delta_\star$, we get stability ($\delta_\star>0$ small), nonexpansivity ($\delta_\star=0$) and contractivity ($\delta_\star<0$).

In light of \eqref{eq:nu} and the final remarks following \eqref{eq:stab_cond}, we would like the weight matrix $A$ to be such that $\delta_\star$ is a positive constant of moderate size, so that the neural ODE \eqref{eq:1lnODE} does not significantly amplify the perturbation in input. Since this fact generally does not happen if the neural ODE is trained using standard gradient methods, we fix $\delta\in\R$, $\delta<\delta_\star$, we compute the closest matrix $B$ to $A$ in a certain metric such that $\mu_2(DB)=\delta$, for all $D\in\Omega_m$, and we replace $A$ by $B$ in the neural ODE. More precisely, we compute a matrix of the type
\begin{equation} \label{eq:proj}
    A_{\delta} \in A + \argmin_{\Delta\in\R^{n\times n}} \left\{ \| \Delta \|_F \ : \ \max_{D\in\Omega_m} \mu_2(D(A+\Delta)) = \delta \right\},
\end{equation}
where the Frobenius inner product of two matrices $M,N\in\R^{n\times n}$ and the corresponding Frobenius norm are defined as
\[
    \langle M , N \rangle_F = \TR(N^\top M) = \sum_{i,j=1}^{n} M_{ij}N_{ij}, \qquad \|M\|_F = \sqrt{ \langle M , M \rangle_F }.
\]
In this framework, it could be desirable that the perturbation $\Delta$ has a certain structure, i.e. $\Delta\in\mathcal{S}$, where $\mathcal{S}$ is a subspace of $\R^{n\times n}$, for instance the space of diagonal matrices. In this case, we can redefine the problem as
\begin{equation} \label{eq:str_proj}
    A_{\delta}^\mathcal{S} \in A + \argmin_{\Delta\in\mathcal{S}} \left\{ \| \Delta \|_F \ : \ \max_{D\in\Omega_m} \mu_2(D(A+\Delta)) = \delta \right\}.
\end{equation}
The definitions \eqref{eq:proj} and \eqref{eq:str_proj} involve the solution of a matrix nearness problem that is the main effort of this paper (as a general reference to address this problem by continuous optimisation tools we refer the reader to \cite{guglielmi2025matrix}). We notice that such matrix nearness problem is not convex  and the matrices  $A_{\delta}$ and $A_{\delta}^\mathcal{S}$ may not be unique since there could be multiple minimisers. However, in the numerical experiments in Section \ref{sec:num_exp} we will consider the solution computed by the algorithm. Moreover, the choice of $\delta$ as a positive constant of moderate size yields a computed matrix that is close to the original weight matrix $A$.

We will address only the problem of computing $A_{\delta}$, but the proposed approach can be extended easily to compute $A_{\delta}^\mathcal{S}$ by introducing the orthogonal projection onto $\mathcal{S}$.

\subsection{A two-level method for the weight matrix stabilisation}
Our solution approach to compute $A_{\delta}$ follows the methods presented in \citep{guglielmi2017matrix,guglielmi2025matrix,guglielmi2023rank} and their extensions \citep{guglielmi2024stabilization,guglielmi2024contractivity}, and it can be sketched as follows. 

Given a target value $ \delta \in\R$, we aim to find a perturbation $\Delta\in \R^{n\times n}$ of minimal Frobenius norm such that 
\[
    \max_{D\in\Omega_m} \mu_2(D(A+\Delta)) =  \delta .
\]
If $\max_{D\in\Omega_m} \mu_2(DA) \le  \delta $, then we simply set $\Delta = 0$, which implies $A_{\delta}=A$. Otherwise we proceed with the computation of $\Delta$, which we rewrite as $\Delta=\eps E$, with
\[
    E \in \mathbb{S}_1 := \{ E\in\R^{n\times n} : \|E\|_F = 1 \}
\]
and where $\eps>0$ denotes the perturbation size. We consider a two-level approach consisting of an {\it inner} and an {\it outer iteration}.
\begin{itemize}
    \item {\bf Inner iteration.} For a fixed $\eps>0$, we solve the optimisation problem
    \begin{equation}\label{eq:inn_it}
        \min_{E\in\mathbb{S}_1} \nu_\eps(E), 
    \end{equation}
    where
    \begin{equation*} \label{eq:mueps}
    \nu_\eps(E) := \max_{D\in\Omega_m} \mu_2 ( D ( A + \eps E ) ).
    \end{equation*}
    We denote the minimiser of \eqref{eq:inn_it} as $E_\star(\eps)$ and we define $f(\eps) =  \nu_\eps \left( E_\star(\eps) \right)$.
    \item {\bf Outer iteration.} We solve the scalar equation with respect to $\eps$
    \begin{equation}\label{eq:feps}
        f(\eps) =  \delta ,
    \end{equation}
    and we tune the parameter $\eps$ by means of a Newton-like technique to solve the problem
    \begin{equation}\label{eq:out_it}
        \min \{\eps>0\ :\ f(\eps)= \delta \},
    \end{equation}
    whose solution is denoted as $\eps_\star$.
\end{itemize}
This yields a matrix $A+\Delta_\star$, with
$\Delta_\star = \eps_\star E_\star(\eps_\star)$, such that 
\[
    f\left( \eps_\star \right) =
    \max_{D\in\Omega_m} \mu_2(D(A+\eps_\star E_\star(\eps_\star))) =  \delta .
\]
This means that for the slightly modified matrix $A + \Delta_\star$, the logarithmic norm of $D (A + \Delta_\star)$ is uniformly bounded from above by $ \delta $, independently of $D \in \Omega_m$.

Numerical experiments on MNIST \citep{deng2012mnist}, FashionMNIST \citep{xiao2017fashion}, CIFAR10 \citep{krizhevsky2009learning} and SVHN \citep{netzer2011reading} datasets show that neural networks with a one layer weight-tied neural ODE, trained using the proposed stabilisation, are more robust against adversarial attacks than those trained using a classical approach.

\section{Inner iteration: minimising the objective function} \label{sec:inner}
In this section, for a fixed $\eps>0$, we focus on the solution of the inner iteration optimisation problem \eqref{eq:inn_it}. In order to minimise $\nu_\eps(E)$ over all matrices $E\in\mathbb{S}_1$, by definition of logarithmic 2-norm \eqref{eq:l2n}, we aim to minimise the functional
\begin{equation}\label{eq:func}
    F_\eps(E) = \frac{1}{2} \sum_{i=1}^{n} (\lambda_i(\Sym(D_\star(A+\eps E)))- \delta )_+^2,
\end{equation}
over all matrices $E\in\mathbb{S}_1$, with
\begin{equation}\label{eq:compDm}
    D_\star = D_\star(\eps,E) := \argmax_{D\in\Omega_m} \mu_2(D(A+\eps E)).
\end{equation}
Therefore the inner iteration consists in solving the following minimisation problem
\begin{equation}\label{eq:min_prob}
    \min_{E\in\mathbb{S}_1} \frac{1}{2} \sum_{i=1}^{n} (\lambda_i(\Sym(D_\star(A+\eps E)))- \delta )_+^2,
\end{equation}
where the computation of $D_\star$ has already been discussed in \citep{guglielmi2024contractivity}.

\begin{remark}
    The matrix $D_\star$ is piecewise constant in $\eps$ and $E$, that is $D_\star$ does not change for small variations of $\eps$ or $E$. Thus, from now on, we consider $D_\star$ as a locally constant matrix with vanishing derivatives.
\end{remark}

We will derive a matrix differential equation whose stationary points are the local minima of the minimisation problem \eqref{eq:min_prob}. We then illustrate how to integrate the differential equation and we discuss the properties of its stationary points.

We will often use the following standard perturbation result for eigenvalues. 
\begin{lemma} {\rm \citep[Section II.1.1]{Kato2013}} \label{lem:eigderiv}
Consider a continuously differentiable symmetric matrix valued function $C : \R \to \R^{n\times n}$. Let $\lambda(t)$ be a simple eigenvalue of $C(t)$ for all $t\in \R$ and let $x(t)$ with $\|x(t) \| =1$ be the associated (right and left) eigenvector. Then $\lambda (t)$ is differentiable with
\begin{equation}\nonumber
\dot\lambda(t) = x(t)^\top \dot{C}(t) x(t) = \langle x(t) x(t)^\top, \dot {C}(t) \rangle_F.
\end{equation}
\end{lemma}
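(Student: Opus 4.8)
The plan is to first secure the regularity of the eigenpair near an arbitrary fixed $t_0$, and then to obtain the formula by a one-line differentiation of the eigenvalue relation followed by a projection onto $x(t)$. For the regularity step: since $\lambda(t_0)$ is a simple eigenvalue of the symmetric matrix $C(t_0)$, it is a simple root of the characteristic polynomial $p(z,t)=\det(zI-C(t))$, so $\partial_z p(\lambda(t_0),t_0)\neq 0$, and the implicit function theorem produces a $C^1$ branch $t\mapsto\lambda(t)$ solving $p(\lambda(t),t)=0$ near $t_0$ which agrees with the given eigenvalue. For the eigenvector, one can use the Riesz spectral projector $P(t)=\frac{1}{2\pi\iu}\oint_\Gamma (zI-C(t))^{-1}\,\d z$, where $\Gamma$ is a small circle around $\lambda(t_0)$ enclosing no other eigenvalue; the resolvent is $C^1$ in $t$ wherever $z$ is not an eigenvalue, hence $P(t)$ is $C^1$ and of rank one for $t$ near $t_0$, so $x(t)=P(t)v/\|P(t)v\|$ is a $C^1$ unit eigenvector for any fixed $v$ with $P(t_0)v\neq 0$. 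Up to a sign this is the eigenvector in the statement. (Alternatively, one may simply invoke Kato's perturbation theory, which is exactly what the cited reference supplies.)

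With differentiability in hand, I would differentiate $C(t)x(t)=\lambda(t)x(t)$ to get
\[
    \dot C(t)\,x(t) + C(t)\,\dot x(t) = \dot\lambda(t)\,x(t) + \lambda(t)\,\dot x(t),
\]
then left-multiply by $x(t)^\top$. Using the symmetry of $C(t)$, namely $x(t)^\top C(t) = (C(t)x(t))^\top = \lambda(t)\,x(t)^\top$, the term $x(t)^\top C(t)\,\dot x(t)$ cancels with $\lambda(t)\,x(t)^\top\dot x(t)$ on the right. Since $x(t)^\top x(t)=\|x(t)\|^2=1$, what remains is $\dot\lambda(t) = x(t)^\top \dot C(t)\, x(t)$. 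The Frobenius form then follows from $x^\top M x=\sum_{i,j} x_i x_j M_{ij} = \sum_{i,j}(xx^\top)_{ij}M_{ij} = \langle xx^\top, M\rangle_F$, applied with $M=\dot C(t)$.

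The only genuinely delicate point is the regularity of the eigenpair; the rest is routine. I expect to handle it by the resolvent/spectral-projector argument above (or by citing Kato as the paper does). A minor subtlety worth recording is that $x(t)$ is determined only up to sign and the chosen branch must be continuous; since the final expression involves $x(t)x(t)^\top$, it is insensitive to this sign ambiguity, so the formula is well posed.
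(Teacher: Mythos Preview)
Your argument is correct and self-contained: the implicit function theorem (or the Riesz projector) gives the $C^1$ regularity of the eigenpair near any $t_0$, and the one-line differentiation of $C(t)x(t)=\lambda(t)x(t)$ followed by left-multiplication by $x(t)^\top$ and use of symmetry yields the formula exactly as you wrote. The sign ambiguity remark is also apt.

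By contrast, the paper does not prove this lemma at all: it is stated with a citation to Kato's book and used as a black box. So your proposal is not so much a different route as a genuine proof where the paper simply invokes the literature. What your write-up buys is self-containment and an explicit mechanism (resolvent projector) for the eigenvector smoothness; what the citation buys is brevity and the assurance that edge cases (e.g.\ global branch selection, higher regularity) are handled in full generality. Either is acceptable here, and you already note the citation alternative yourself.
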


We suppose that $E$ is a smooth matrix valued function $E(t)$ of the independent real variable $t$ such that the eigenvalues $\lambda_i(t)$ of $\Sym(D_\star(A+\eps E(t))$ are simple, for all $t$, with corresponding unit eigenvectors $x_i(t)$ and $D_\star$ is defined in \eqref{eq:compDm}. Then, applying Lemma \ref{lem:eigderiv}, yields
\[
    \dot{\lambda}_i(t) = \eps x_i(t)^\top\left( \frac{D_\star\dot{E}(t)+\dot{E}(t)^\top D_\star}{2} \right)x_i(t) = \eps \frac{x_i(t)^\top D_\star\dot{E}(t)x_i(t) + x_i(t)^\top \dot{E}(t)^\top D_\star x_i(t)}{2}.
\]
Consequently, by omitting the dependence on $t$ for conciseness, we have that
\begin{equation*}
    \frac{1}{\eps}\frac{\text{d}}{\text{d}t} F_\eps(E) = \sum_{i=1}^{n} \gamma_i \left( \frac{x_i^\top D_\star\dot{E} x_i + x_i^\top \dot{E}^\top D_\star x_i}{2} \right) = \sum_{i=1}^{n} \gamma_i z_i^\top \dot{E} x_i = \left\langle \sum_{i=1}^{n} \gamma_i z_i x_i^\top , \dot{E} \right\rangle_F,
\end{equation*}
with $z_i(t) = D_\star x_i(t)$ and $\gamma_i(t) = (\lambda_i(\Sym(D_\star(A+\eps E(t))))- \delta )_+$. Dropping the irrelevant factor $\eps$, which amounts to a rescaling of time, yields that the free gradient of $F_\eps(E)$ is
\begin{equation}\label{eq:free_grad}
    G_\eps(E) = \sum_{i=1}^{n} \gamma_i z_i x_i^\top.
\end{equation}
The admissible steepest descent direction $\dot{E}$ of $F_\eps(E)$ must fulfill the constraint $\|E(t)\|_F=1$ and hence, as shown in \citep[Lemma 2.6]{guglielmi2023rank} and \citep[Lemma 3.2]{guglielmi2024low}, it has to be
\begin{equation}\label{eq:derE}
    \dot{E} = -G_\eps(E) + \mu E, \quad \text{with} \quad \mu = \langle G_\eps(E) , E \rangle_F.
\end{equation}
This yields a differential equation for $E$ which, by construction, preserves the unit Frobenius norm of $E(t)$:
\[
    \frac{1}{2}\dv{t}\|E(t)\|_F^2 = \langle E , \dot{E} \rangle_F = \langle E , -G_\eps(E) + \mu E \rangle_F = - \langle E , G_\eps(E) \rangle_F + \mu \langle E , E \rangle_F = 0.
\]
Moreover, equation \eqref{eq:derE} is a gradient system, i.e. along its solutions the functional $F_\eps(E(t))$ decays monotonically, since the constraint $\|E\|_F=1$ and the Cauchy-Schwarz inequality implies
\begin{equation}\label{eq:comp}
    \frac{1}{\eps}\dv{}{t} F_\eps(E(t)) = \langle G_\eps(E) , \dot{E} \rangle_F = \langle G_\eps(E) , -G_\eps(E) + \mu E \rangle_F = - \|G_\eps(E)\|_F^2 + \langle G_\eps(E) , E \rangle_F^2 \leq 0.
\end{equation}

\begin{remark}
    \label{rem:eigsimple}
    Our approach is based on the eigenvalue perturbation formula of Lemma \ref{lem:eigderiv}, which assumes that the eigenvalues are simple. Situations where eigenvalues are multiple are either non-generic or can happen generically only at isolated times $t$, thus they do not affect the computation after discretisation of the differential equation. Our assumption therefore is not restrictive, since the subset of $\R^{n\times n}$ of matrices with multiple eigenvalues has zero measure \citep{avron1978analytic} and, even if a continuous trajectory runs into a point with multiple eigenvalues, it is highly unlikely that the discretisation of the differential equation captures it. Also, from the practical point of view, we experienced that the numerical integration is not affected by the coalescence of two or more eigenvalues. Thus, from here on, we assume that the eigenvalues of $\Sym(D_\star(A+\eps E))$ are simple.
\end{remark}
%
%

The stationary points of \eqref{eq:derE} are characterised as follows.

\begin{theorem} \label{th:exchar}
    Provided that there is at least an eigenvalue of $\Sym(D_\star(A+\eps E))$ larger than $ \delta $, the following statements are equivalent along solutions of \eqref{eq:derE}:
    \begin{itemize}
        \item[1.] $\dv{}{t} F_\eps(E(t)) = 0$;
        \item[2.] $\dot{E}=0$;
        \item[3.] $E$ is a multiple of $G_\eps(E)=\sum_{i=1}^n \gamma_iz_ix_i^\top$.
    \end{itemize}
\end{theorem}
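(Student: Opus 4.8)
The plan is to exploit the monotonicity identity \eqref{eq:comp}, i.e. $\frac{1}{\eps}\dv{}{t}F_\eps(E(t)) = -\|G_\eps(E)\|_F^2 + \langle G_\eps(E), E\rangle_F^2$, together with the constraint $\|E(t)\|_F = 1$ preserved by \eqref{eq:derE}, and to close the loop via $1 \Leftrightarrow 2$ and $1 \Rightarrow 3 \Rightarrow 2$. For $1 \Leftrightarrow 2$ I would expand the squared Frobenius norm of the right-hand side of \eqref{eq:derE}: with $\mu = \langle G_\eps(E), E\rangle_F$ and $\|E\|_F = 1$ one has $\|\dot E\|_F^2 = \|G_\eps(E)\|_F^2 - 2\mu\langle G_\eps(E), E\rangle_F + \mu^2\|E\|_F^2 = \|G_\eps(E)\|_F^2 - \mu^2$, so that \eqref{eq:comp} reads $\dv{}{t}F_\eps(E(t)) = -\eps\|\dot E\|_F^2$; hence $\dv{}{t}F_\eps(E(t)) = 0$ if and only if $\dot E = 0$.

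For $1 \Rightarrow 3$ the crucial preliminary is that the standing hypothesis --- that some eigenvalue of $\Sym(D_\star(A+\eps E))$ exceeds $\delta$ --- forces $G_\eps(E) \ne 0$. Indeed, if $\gamma_j = (\lambda_j(\Sym(D_\star(A+\eps E))) - \delta)_+ > 0$, right-multiplying $G_\eps(E) = \sum_i \gamma_i z_i x_i^\top$ by the unit eigenvector $x_j$ and using orthonormality of the $x_i$ gives $G_\eps(E)x_j = \gamma_j z_j = \gamma_j D_\star x_j$, which is nonzero since $D_\star \in \Omega_m$ has diagonal entries $\ge m > 0$ and is therefore invertible. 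Now $\dv{}{t}F_\eps(E(t)) = 0$ together with \eqref{eq:comp} yields $\langle G_\eps(E), E\rangle_F^2 = \|G_\eps(E)\|_F^2 = \|G_\eps(E)\|_F^2\|E\|_F^2$, the equality case of the Cauchy--Schwarz inequality in the Frobenius inner product; since $G_\eps(E) \ne 0$ this forces $E$ to be a scalar multiple of $G_\eps(E)$, and the multiple is nonzero because $\|E\|_F = 1$. This is statement 3.

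The implication $3 \Rightarrow 2$ is then a one-line substitution: if $E = c\,G_\eps(E)$ with $c \ne 0$, then $\mu = \langle G_\eps(E), E\rangle_F = \|E\|_F^2/c = 1/c$, whence $\dot E = -G_\eps(E) + \mu E = -E/c + E/c = 0$. The only genuinely delicate point in the whole argument is ruling out $G_\eps(E) = 0$: this is exactly where the hypothesis on the eigenvalues is used, since without it the Cauchy--Schwarz equality case would be vacuous and statement 3 would hold trivially for every $E$. Everything else is routine manipulation of the gradient-system structure established before the theorem, and one could equivalently replace the cycle above by $2 \Rightarrow 1$ (immediate from $\dv{}{t}F_\eps(E(t)) = \eps\langle G_\eps(E), \dot E\rangle_F$), $1 \Rightarrow 3$, and $3 \Rightarrow 2$.
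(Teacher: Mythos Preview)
Your proof is correct and follows essentially the same approach as the paper: the core step is the Cauchy--Schwarz equality case in \eqref{eq:comp} combined with $G_\eps(E)\ne 0$, and the remaining implications are routine consequences of the gradient-system form \eqref{eq:derE}. Your added identity $\dv{}{t}F_\eps(E(t)) = -\eps\|\dot E\|_F^2$ and the explicit argument (via invertibility of $D_\star$) for $G_\eps(E)\ne 0$ are slight refinements, but the logic coincides with the paper's cycle $3\Rightarrow 2\Rightarrow 1$ and $1\Rightarrow 3$.
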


\begin{proof}
Let $G=G_\eps(E)$, and we recall that
\begin{align}
    &\dot{E} = -G + \langle G , E \rangle_F E , \label{eq:1.1} \\
    &\frac{1}{\eps} \dv{}{t} F_\eps(E) = \langle G , \dot{E} \rangle_F \, . \label{eq:1.2}
\end{align}
Equation \eqref{eq:1.1} and $\|E\|_F = 1$ yields that statement 3 implies statement 2, while equation \eqref{eq:1.2} yields that statement 2 implies statement 1. So it remains to show that statement 1 implies statement 3. Since $G$ is non-zero because we have assumed that at least an eigenvalue is larger than $ \delta $, inequality \eqref{eq:comp} is strict unless $G$ is a real multiple of $E$. Hence, statement 3 implies statement 1.
\end{proof}

In Algorithm \ref{alg_intstep} we provide a schematic description of a single step of the numerical integration of equation \eqref{eq:derE}. We make use of the simple Euler method, since we are only interested in the stationary points of the ODE and not in accurately approximating all the trajectory of the solution, but considering other explicit methods could also be possible.

\begin{algorithm}[ht] 
\DontPrintSemicolon
\KwData{$A$, $E_k \approx E(t_k)$, $f_k = F_\eps(E_k)$, $h_{k}$ (proposed step size), $\eps$, $\theta > 1$}
\KwResult{$E_{k+1}\approx E(t_{k+1})$, $h_{k+1}$}
\Begin{
\nl Initialise the step size by the proposed one, $h=h_{k}$\;
\nl Compute $D_\star^{(k)}=\argmax_{D\in\Omega_m}\mu_2(D(A+\eps E_k))$\;
\nl Compute the eigenpairs $(\lambda_i, x_i)$, $\lambda_i> \delta $ and $\| x_i  \|  = 1$, of $\Sym(D_\star^{(k)}( A+\eps E_k))$\;
\nl Compute $z_i = D_\star^{(k)} x_i$ \;
\nl Compute the gradient $G_k \approx G_\eps(E(t_k))$ according to \eqref{eq:free_grad}\;
\nl Compute $\dot{E}_k \approx \dot{E}(t_k)$ according to \eqref{eq:derE} \;
\nl Initialise $f(h) = f_k$ and $\mathrm{reject=0}$\;
\nl Set $E(h) = E_k + h \dot{E}_k$, and $E(h)=\frac{E(h)}{\|E(h)\|_F}$\;
\While{$f(h) \ge f_k$}{
\nl Compute $D_\star(h)=\argmax_{D\in\Omega_m}\mu_2(D(A+\eps E(h)))$\;
\nl Compute the eigenpairs $(\lambda_i, x_i)$, $\lambda_i> \delta $ and $\| x_i  \|  = 1$, of $\Sym(D_\star(h)( A+\eps E(h)))$, and set $f(h)=F_\eps(E(h))$\;
\If{$f(h) \ge f_k$}{Reduce the step size, $h:=h/\theta$\; \nl Set $E(h) = E_k + h \dot{E}_k$, and $E(h)=\frac{E(h)}{\|E(h)\|_F}$\; \nl Set $\mathrm{reject=1}$}
}
\eIf{$\mathrm{reject=0}$}
{Set $h_{\rm next} := \theta h$}
{Set $h_{\rm next} := h$}
\nl Set the starting values for the next step: $E_{k+1}=E(h)$, and $h_{k+1}=h_{\rm next}$\;
\Return
}
\caption{Integration step from $t_k$ to $t_{k+1}$ for the constrained gradient system (with $\eps$ fixed)
\label{alg_intstep}}
\end{algorithm}

\subsection{Simplification of the inner iteration}

We use the notation that for a vector $d \in \R^n$, $D = \diag(d)\in \R^{n\times n}$ indicates the diagonal matrix with $d$ in the diagonal and viceversa for a matrix $D\in \R^{n\times n}$, $\diagvec(D)\in \R^n$ denotes the vector formed by its diagonal. We notice that after the $k$-th step of the Euler method, with $k>0$, for equation \eqref{eq:derE}, namely lines 2 and 9 in Algorithm \ref{alg_intstep}, the matrix
\begin{equation*}\label{eq:maxD}
    D_\star^{(k)} = \argmax_{D\in\Omega_m} \mu_2(D(A+\eps E_k))
\end{equation*}
has to be updated. 

We let $d_\star^{(k)} = \diag ( D_\star^{(k)} )$. Computing $d_\star^{(k+1)}$ (equivalently the matrix $D_\star^{(k+1)}$) requires the integration of a constrained gradient system, see \citep{guglielmi2024contractivity}, and thus, the overall exact integration of equation \eqref{eq:derE} would be expensive. 
In order to avoid this, we try to approximate $d_\star^{(k+1)}$ by exploiting the property, which we expect, that its entries are either $m$ or 1 (see \citep[Theorem 3.1]{guglielmi2024contractivity}).

We now formalise how this approximation is carried out. Given $E_k\approx E(t_k)$ and $D_\star^{(k)}$, we need to approximate
\begin{equation} \label{eq:Dkp1}
D_\star^{(k+1)}=\argmax_{D\in\Omega_m}\mu_2(D(A+\eps E_{k+1})),
\end{equation}
where $E_{k+1}\approx E(t_{k+1})$. Instead of integrating the constrained gradient system in \citep[Section 3]{guglielmi2024contractivity} for $d = \diagvec(D)$, that is
\begin{equation}\label{eq:Ddot}
    \dot{d} = g := \diagvec(\Sym(zx^\top)),
\end{equation}
where $x$ is the eigenvector associated with the largest eigenvalue of $\Sym(D(A+\eps E_{k+1}))$ and $z=(A+\eps E_{k+1})x$,
we use a semi-combinatorial approach.

We aim to compute a solution $D_\star = \diag (d_\star)$ of \eqref{eq:Dkp1} as follows.
We start setting $d^{[0]} = d_\star^{(k)}$ and then we update it according to the following iterative process starting from $\ell=0$:
\begin{equation}\label{eq:imp_dir_1}
    \left(d^{[\ell+1]}\right)_{i} =
    \begin{cases}
        1 & \quad \text{if $\left(g^{[\ell]}\right)_{i}>0$} \\
        m & \quad \text{if $\left(g^{[\ell]}\right)_{i}<0$} 
    \end{cases},
\end{equation}
where $g^{[\ell]}$ is the gradient defined in \eqref{eq:Ddot}
for $D = \diag(d^{[\ell]})$. If, for some $\ell$, it occurs that $d^{[\ell+1]} = d^{[\ell]}$, the iteration stops, $d^{[\ell]}$ is the sought solution and $D_\star^{(k+1)} = \diag( d^{[\ell]})$.

The idea behind \eqref{eq:imp_dir_1} is that, given the constraint that the entries of $d$ are bounded from below by $m$ and from above by $1$, $d^{[\ell]}$ is optimal if in correspondence of
its maximal entries (equal to $1$), the gradient is positive, and in correspondence with its minimal entries (equal to $m$), the gradient is negative. The number of iterations $maxit$ we perform is typically small ($20$ at most in our experiments). 

The iterative method is outlined in Algorithm \ref{alg_update}.

\begin{algorithm}[ht!] 
\DontPrintSemicolon
\KwData{$A$, $d_\star^{(k)}$, $E_{k+1} \approx E(t_{k+1})$, $\eps$, $maxit$ (default $20$)}
\KwResult{$d_\star^{(k+1)}$}
\Begin{
\nl Set $\ell=0$, $d^{[\ell]}=d_\star^{(k)}$ and $Stop=False$\;
\While{$\ell< maxit\ \text{and}\ Stop=False$}{
\nl Define $d^{[\ell+1]}$ as the update of $d^{[\ell]}$ according to \eqref{eq:imp_dir_1}\;
\nl Compute the eigenvector $x^{[\ell+1]}$ associated with the largest eigenvalue of $\Sym(D^{[\ell+1]}(A+\eps E_{k+1}))$\;
\nl Compute $z^{[\ell+1]}=(A+\eps E_{k+1})x^{[\ell+1]}$\;
\nl Compute the approximate gradient $g^{[\ell+1]}=\diagvec(\Sym(z^{[\ell+1]}(x^{[\ell+1]})^\top))$\;
\nl Check the sign of the approximate gradient $g^{[\ell+1]}$ according to \eqref{eq:imp_dir_1} and set $Stop=True$ if $d^{[\ell+1]}=d^{[\ell]}$ \;
\nl Set $\ell=\ell+1$
}
\eIf{$\ell<maxit$}
{\nl Set $d_\star^{(k+1)}=d^{[\ell]}$\;}
{\nl Compute $D_\star^{(k+1)}$ by integrating the constrained gradient system in \eqref{eq:Ddot}\;}
\Return
}
\caption{Iteration to update $D_\star^{(k+1)}=\diag(d^{(k+1)}_\star)$}
\label{alg_update}
\end{algorithm}

\subsubsection{Illustrative example}

We consider the matrix
\[
A = \left( \begin{array}{rrr}
   -0.39 &  -1.16 &   0.74 \\
    1.14 &   0.96 &   0.15 \\
    0.42 &  -0.14 &  -2.32
    \end{array} \right)
\]
and the perturbation $\eps E(t)$ with $\eps = 0.3$ and
\[
M(t) =
    \left( \begin{array}{rrr}
    -\sin(2 t)/2 & \sin(t^2) & t\\ 
    -t/4 &  -t & \sin(t/2) \\ 
    -\sin(t)^2 & \cos(t) & t
    \end{array} \right), \qquad E(t)=\frac{M(t)}{\|M(t)\|_F}.
\]
We fix $m=0.5$ and compute $\mu(t) = \mu_2 \left( D(t) (A + \eps E(t)) \right)$ in the time interval $[0,1]$ where $D(t)$ is the local extremiser computed by our algorithm. 
Figure \ref{fig:ill} illustrates the descendent behaviour of $\mu(t)$. We use a discretisation stepsize $h=0.05$ and solve the optimisation problem by the simplified inner iteration.
\begin{figure}[ht]
    \centering
    \includegraphics[width=0.5\textwidth]{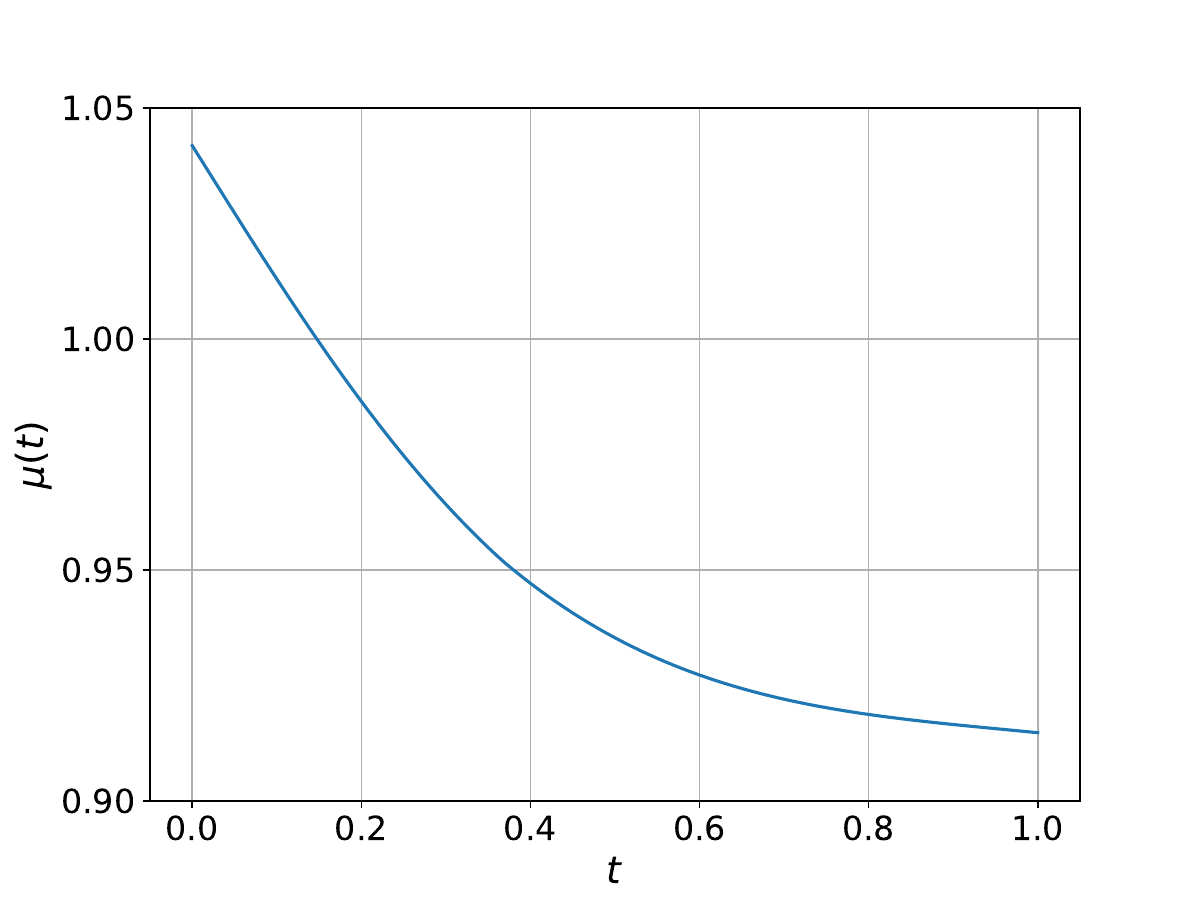}
    \caption{Behaviour of $\mu(t) = \mu(D(t) (A+\eps E(t))$ as $t \in [0,1]$.}
    \label{fig:ill}
\end{figure}
For $t \le 0.3$ the extremiser is $D_1 = \diag\left( m \quad 1 \quad 1 \right) $. At $t=0.45$, the extremiser ceases to exist since the gradient is
\[
    G = \left(-0.2865 \quad  1.0832  \quad  -0.0002 \right) ^\top,
\]
due to the negativity of last entry. At this point, shifting the last entry of $D$ from $m$ to $1$ gives the candidate extremiser $D_2 = \diag\left( m \quad 1 \quad m \right)$. The corresponding gradient is
\[
    G = \left( -0.2804 \quad 1.0830 \quad  -0.0043 \right) ^\top,
\]
which satisfies the necessary condition for extremality so that $D_2$ is indeed an extremiser. From now on, that is for all $t \ge 0.45$, $D_2$ remains an extremiser. However, we note that for $t \in [0.35,0.45]$ both $D_1$ and $D_2$ are extremisers although our algorithm detects only the transition at $t=0.45$ when $D_2$ becomes the unique extremiser. 

\subsection{Two layer vector field}
We now consider a vector field that is a two-layer neural network
\[
    f(x) = \sigma (A_2 \sigma (A_1 x + b_1) + b_2), \quad x\in\R^n.
\]
By using the Lagrange theorem, for any two vectors $x$ and $y$, we get
\begin{equation*}
    f(x) - f(y) = \diag(\sigma'(u_2))A_2 \diag(\sigma'(u_1))A_1 (x-y) = D_2 A_2 D_1 A_1 (x-y),
\end{equation*}
with $D_i = \diag(\sigma'(u_i))$, for suitable vectors $u_1,u_2$. Therefore, with computations analogous to the one layer case, we obtain that the neural ODE \eqref{eq:neuralODE} satisfies the one-sided Lipschitz condition
\[
    \langle f(x) - f(y) , x - y \rangle \le \delta_\star  \|x - y \| ^2, \quad x,y\in\R^n,
\]
with
\[
     \delta_\star := \max_{D_1,D_2\in\Omega_m} \mu_2(D_2 A_2 D_1 A_1).
\]
Given $ \delta \in\R$, $ \delta <\delta_\star $, the goal is now the computation of the smallest matrices $\Delta_1,\Delta_2$ in Frobenius norm such that
\[
    \max_{D_1,D_2\in\Omega_m} \mu_2( D_2 (A_2+\Delta_2) D_1 (A_1+\Delta_1)) =  \delta .
\]
In the following, we will write $\Delta_i = \eps_i E_i$, for $i=1,2$, with $E_i$ a matrix of unit Frobenius norm and $\eps_i$ its amplitude. For simplicity of presentation, we are assuming $\eps_1=\eps_2=\eps$.

Fixed $\eps>0$, the minimisation problem \eqref{eq:min_prob} becomes
\[
    \min_{E_1,E_2\in\mathbb{S}_1} F_\eps(E_1,E_2), \quad \text{with } F_\eps(E_1,E_2) = \frac{1}{2} \sum_{i=1}^{n} (\lambda_i(\Sym(D_\star^{1}(A_1+\eps E_1)D_\star^{2}(A_2+\eps E_2))- \delta )_+^2,
\]
where
\[
    (D_\star^1,D_\star^2) = \argmax_{D_1,D_2\in\Omega_m} \mu_2 (D_2(A_2+\eps E_2)D_1(A_1+\eps E_1)).
\]
Applying Lemma \ref{lem:eigderiv} to $F_\eps(E_1,E_2)$, with $E_i=E_i(t)$ for $i=1,2$, yields
\begin{equation*}
    \frac{1}{\eps}\dv{}{t}F_\eps(E_1(t),E_2(t)) = \left\langle \dot{E}_1 , \sum_{i=1}^n \gamma_i z_i^1 (w_i^1)^\top \right\rangle_F + \left\langle \dot{E}_2 , \sum_{i=1}^n \gamma_i z_i^2 (w_i^2)^\top \right\rangle_F,
\end{equation*}
where $\gamma_i=(\lambda_i(\Sym(D_\star^{1}(A_1+\eps E_1)D_\star^{2}(A_2+\eps E_2)))- \delta )_+$, $x_i$ is the unit eigenvector corresponding to the eigenvalue $\lambda_i(\Sym(D_\star^{1}(A_1+\eps E_1)D_\star^{2}(A_2+\eps E_2)))$, and
\begin{align*}
    z_i^1 &= D_\star^{1} (A_2 + \eps E_2(t))^\top D_\star^2 x_i, \quad w_i^1 = x_i, \\
    z_i^2 &= D_\star^2 x_i, \quad w_i^2 = D_\star^1(A_1+\eps E_1(t))x_i.
\end{align*}
Therefore, for $k=1,2$, the free gradients of $F_\eps$ with respect to $E_k$ are
\[
    G_k = \sum_{i=1}^n \gamma_i z_i^k (w_i^k)^\top,
\]
and the associated constrained gradient systems are
\begin{equation}\label{eq:gs}
    \dot{E}_k = -G_k + \langle G_k , E_k \rangle_F E_k.
\end{equation}

\section{Outer iteration: tuning the size of the perturbation} \label{sec:outer}
In this section, we focus on the solution of the outer iteration optimisation problem \eqref{eq:out_it}. We let $E_\star(\eps)$ of unit Frobenius norm be a local minimiser of the inner iteration \eqref{eq:inn_it}, and we denote by $\lambda_i(\eps)$ and $x_i(\eps)$, for $i=1,\dots,n$, the eigenvalues and unit eigenvectors of $\Sym(D_\star(A+\eps E_\star(\eps)))$. We denote by $\eps_\star$ the smallest value of $\eps$ such that $F_{\eps_\star}(E_\star(\eps_\star))=0$. From the Schur normal form of $\Sym(DA)$, it is trivial to construct a diagonal perturbation of the diagonal Schur factor that shifts the eigenvalues larger than $ \delta $ to the value of $ \delta $. Hence, it follows that $\eps_\star$ exists, with
\[
    \eps_\star \le \left(\sum_{i=1}^n (\lambda_i(\Sym(D_\star A)) -  \delta ))_+^2\right)^\frac{1}{2}.
\]    
To determine $\eps_\star$, we are thus left with a one-dimensional root-finding problem, for which a variety of standard methods, such as bisection and the like, could be employed. In the following we derive a Newton-like algorithm, for which we need to impose an extra assumption.

\begin{assumption}\label{ass:1}
    For $\eps<\eps_\star$, we assume that the eigenvalues $\lambda_i(\eps)$ of $\Sym(D_\star(A+\eps E_\star(\eps)))$ are simple. Moreover, $E_\star(\eps)$, $\lambda_i(\eps)$ and $x_i(\eps)$ are assumed to be smooth functions of $\eps$.
\end{assumption}

\begin{remark}
     Assumption \ref{ass:1} is important because, if this is not satisfied, the standard derivative formula for simple eigenvalues in Lemma \ref{lem:eigderiv}, which we use, would not hold. In principle, this assumption limits the set of matrices that the algorithm is able to stabilise, since there will be cases where the closest stabilised matrix has multiple eigenvalues. But, such matrices form a subset of $\R^{n\times n}$ with zero measure \citep{avron1978analytic}, so this is a nongeneric situation.
\end{remark}

%
%

For $\eps<\eps_\star$, the following result provides a cheap formula for the computation of the derivative of $f(\eps) = F_\eps(E_\star(\eps))$, which is the basic tool in the construction of the outer iteration of the method. This is expressed in terms of the free gradient of $F_\eps$ at $E_\star(\eps)$:
\[
    G(\eps) = \sum_{i=1}^n \gamma_i(\eps) z_i(\eps) x_i(\eps)^\top,
\]
with $\gamma_i(\eps) = (\lambda_i(\Sym(D_\star(A+\eps E_\star(\eps))))- \delta )_+$ and $z_i(\eps) = D_\star x_i(\eps)$.

\begin{lemma} \label{lem:fepsder}
For $\eps<\eps_\star$, under Assumption \ref{ass:1}, the function $f(\eps)=F_\eps(E_\star(\eps))$ is differentiable, and its derivative equals
\begin{equation}\label{eq:dfeps}
    f'(\eps) = - \|G(\eps)\|_F.
\end{equation}
\end{lemma}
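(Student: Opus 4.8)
The plan is to differentiate $f(\eps) = F_\eps(E_\star(\eps))$ by the chain rule, keeping track of two sources of $\eps$-dependence: the explicit $\eps$ appearing in $A + \eps E$ inside $F_\eps$, and the implicit dependence through the minimiser $E_\star(\eps)$. First I would write
\[
    \dv{}{\eps} f(\eps) = \pdv{F_\eps}{\eps}\Big|_{E=E_\star(\eps)} + \left\langle \pdv{F_\eps}{E}\Big|_{E=E_\star(\eps)} , \dv{}{\eps} E_\star(\eps) \right\rangle_F .
\]
For the second (implicit) term, I would argue that it vanishes: $E_\star(\eps)$ is a constrained stationary point of $F_\eps$ on the unit sphere $\mathbb{S}_1$, so by the first-order optimality condition the free gradient $\pdv{F_\eps}{E}$ at $E_\star(\eps)$ is a scalar multiple of $E_\star(\eps)$ itself (this is exactly statement~3 of Theorem~\ref{th:exchar}, or the projection identity \eqref{eq:derE}); meanwhile, differentiating the constraint $\|E_\star(\eps)\|_F^2 = 1$ gives $\langle E_\star(\eps), \dv{}{\eps}E_\star(\eps)\rangle_F = 0$, so the inner product of a multiple of $E_\star(\eps)$ with $\dv{}{\eps}E_\star(\eps)$ is zero. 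Assumption~\ref{ass:1} supplies the smoothness of $E_\star(\eps)$ needed to make $\dv{}{\eps}E_\star(\eps)$ meaningful, and the simplicity of the eigenvalues makes each $\lambda_i(\eps)$ smooth via Lemma~\ref{lem:eigderiv}.

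For the explicit term, I would compute $\pdv{F_\eps}{\eps}$ at fixed $E = E_\star(\eps)$. Writing $C(\eps) = \Sym(D_\star(A + \eps E_\star(\eps)))$ with $E_\star$ now held frozen, the $\eps$-derivative of the symmetric argument is $\Sym(D_\star E_\star(\eps))$, so by Lemma~\ref{lem:eigderiv} each simple eigenvalue satisfies $\pdv{}{\eps}\lambda_i = x_i^\top \Sym(D_\star E_\star) x_i = x_i^\top D_\star E_\star x_i = (D_\star x_i)^\top E_\star x_i = \langle z_i x_i^\top, E_\star\rangle_F$, recalling $z_i = D_\star x_i$ and that $D_\star$ is locally constant with vanishing derivative (Remark after \eqref{eq:min_prob}). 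Differentiating \eqref{eq:func} then gives
\[
    \pdv{F_\eps}{\eps}\Big|_{E_\star(\eps)} = \sum_{i=1}^n \gamma_i(\eps)\, \pdv{\lambda_i}{\eps} = \sum_{i=1}^n \gamma_i(\eps)\, \langle z_i x_i^\top, E_\star(\eps)\rangle_F = \langle G(\eps), E_\star(\eps)\rangle_F .
\]
Finally, combining the two terms, $f'(\eps) = \langle G(\eps), E_\star(\eps)\rangle_F$. To reach the stated form $f'(\eps) = -\|G(\eps)\|_F$, I would invoke once more the stationarity characterisation: at a constrained \emph{minimiser}, $G(\eps) = \langle G(\eps), E_\star(\eps)\rangle_F\, E_\star(\eps)$, and since the descent direction $-G_\eps(E) + \mu E$ vanishes there, one has $G(\eps) = \mu(\eps) E_\star(\eps)$ with $\mu(\eps) = \langle G(\eps), E_\star(\eps)\rangle_F$; taking Frobenius norms and using $\|E_\star(\eps)\|_F = 1$ gives $\|G(\eps)\|_F = |\mu(\eps)|$. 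The sign is pinned down by the fact that $E_\star(\eps)$ is a minimiser along the gradient flow: the relevant Lagrange multiplier $\mu(\eps)$ is nonpositive (equivalently, one checks that on the branch $\eps < \eps_\star$ the value $f(\eps)$ is strictly decreasing, forcing $\mu(\eps) = -\|G(\eps)\|_F$), yielding $f'(\eps) = -\|G(\eps)\|_F$.

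The main obstacle I anticipate is the sign determination in the last step: showing $\pdv{F_\eps}{\eps} = \langle G, E_\star\rangle_F$ and $G = \mu E_\star$ is routine, but justifying that $\mu(\eps) \le 0$ — i.e. that the minimiser aligns \emph{anti}-parallel with the free gradient rather than parallel — requires a genuine argument, presumably that a parallel alignment would be a constrained maximiser (or at least not the minimiser selected by the gradient system \eqref{eq:derE}), together with the monotonic-decay property \eqref{eq:comp} of the inner flow and the fact that $f(\eps) > \delta$-level sets shrink as $\eps$ grows toward $\eps_\star$. I would also need to confirm that $\gamma_i(\eps)$ is differentiable where it matters: $\gamma_i = (\lambda_i - \delta)_+$ has a kink at $\lambda_i = \delta$, but there $\gamma_i = 0$ and the product $\gamma_i \pdv{}{\eps}\lambda_i$ still has a well-defined (zero) one-sided contribution, so the sum defining $f'(\eps)$ is differentiable; this is a point worth a sentence of care rather than a deep difficulty.
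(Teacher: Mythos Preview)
Your argument is correct and is precisely the standard chain-rule / stationarity argument; the paper does not give its own proof but simply refers to \cite[Lemma~3.5]{guglielmi2017matrix}, whose proof proceeds exactly as you outline (explicit $\eps$-derivative $\langle G,E_\star\rangle_F$, vanishing implicit term from orthogonality of $E_\star'$ to $E_\star$ combined with $G\parallel E_\star$, then the sign of the multiplier). One small notational point: strictly $\partial_E F_\eps = \eps\, G_\eps(E)$ (the factor $\eps$ was dropped in \eqref{eq:free_grad} only as a time rescaling of the flow), but since $\eps>0$ this does not affect the ``scalar multiple of $E_\star$'' conclusion, so the argument is unaffected.
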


\begin{proof}
    See \citep[Lemma 3.5]{guglielmi2017matrix}.
\end{proof}


\subsection{An algorithm to approximate the size of the perturbation}
Since the eigenvalues are assumed to be simple, the function $f(\eps)$ has a double zero at $\eps_\star$ because it is a sum of squares, and hence it is convex for $\eps\le\eps_\star$. This means that we may approach $\eps_\star$ from the left by means of the classical Newton iteration
\[
    \eps_{k+1} = \eps_{k} - \frac{f(\eps_k)}{f'(\eps_k)}, \quad k = 0,1,\dots
\]
which is such that $|\eps_{k+1}-\eps_\star| \approx \frac{1}{2} |\eps_k-\eps_\star|$ and $\eps_{k+1}<\eps_\star$ if $\eps_k < \eps_\star$.

The estimate $|\eps_{k+1}-\eps_\star| \approx \frac{1}{2} |\eps_k-\eps_\star|$ is due to the fact that the Newton method is applied to a function with a double zero. The convexity of the function to the left of $\eps_\star$ guarantees the monotonicity of the sequence and its boundedness.

In Algorithm \ref{alg_approxepsstar} we illustrate how the inner level and the outer level of our optimisation approach have to be combined in order to get the desired approximation of $\eps_\star$. It is important in lines 1 and 4 to choose properly the initial datum of the constrained gradient system \eqref{eq:derE} integration in order to get the best performance of the algorithm.

\begin{algorithm}[ht] 
\DontPrintSemicolon
\KwData{A tolerance $\mbox{tol}>0$ and an initial $\eps_0<\eps_\star$}
\KwResult{$\eps_\star$ and $E(\eps_\star)$}
\Begin{
\nl Integrate the constrained gradient system \eqref{eq:derE} with fixed perturbation size $\eps_0$\;
\nl Compute $f_0=f(\eps_0)$ and $f'_0=f'(\eps_0)$ as in \eqref{eq:feps} and \eqref{eq:dfeps}, and set $k=0$\;
\While{$f_k \ge \mbox{tol}$}{
\nl Set $\eps_{k+1} = \eps_k - \frac{f_k}{f'_k}$\;
\nl Integrate the constrained gradient system \eqref{eq:derE} with fixed perturbation size $\eps_{k+1}$\;
\nl Compute $f_{k+1}=f(\eps_{k+1})$ and $f'_{k+1}=f'(\eps_{k+1})$ as in \eqref{eq:feps} and \eqref{eq:dfeps}, and set $k=k+1$\;
}
\nl Set $\eps_\star=\eps_k$ and $E(\eps_\star) = E(\eps_k)$\;
\Return
}
\caption{Basic algorithm for approximating $\eps_\star$
\label{alg_approxepsstar}}
\end{algorithm}

\begin{itemize}
    \item At perturbation size $\eps_0$, the initial datum is simply chosen to be the opposite of the normalised gradient $-G_{\eps_0}(E)/\|G_{\eps_0}(E)\|_F$ of the functional \eqref{eq:func} evaluated at $E=0$, the zero matrix.
    \item At perturbation size $\eps_{k+1}$, for $k\ge0$, the initial datum is computed in the following way. Given a minimum $E(\eps_k)$ at perturbation size $\eps_k$, the free gradient system
    \begin{equation}\label{eq:Edot_free}
        \dot{E}(t) = -G(E(t)), \quad t\ge0,
    \end{equation}
    with initial datum $E(0)=\eps_k E(\eps_k)$, is solved numerically until $\|E(t)\|_F = \eps_{k+1}$. The result of the numerical integration is then normalised in Frobenius norm and set as the initial datum of the constrained gradient system \eqref{eq:derE} integration at perturbation size $\eps_{k+1}$. This choice guarantees the overall continuity of the functional \eqref{eq:func} throughout its minimisation at different perturbation sizes $\eps$, which is essential to get the best performance of the overall procedure.
\end{itemize}
    
The gradient system \eqref{eq:Edot_free} is integrated with the Euler method -- other explicit methods could also be used -- with constant step size $h$:
\[
    E_{\ell+1} = E_\ell - hG(E_\ell), \quad \ell=0,1,\dots
\]
At each integration step, the norm of the matrix $E(t)$ increases in order to move the eigenvalues below $ \delta $, that is $\|E_\ell\|_F<\|E_{\ell+1}\|_F$, for $\ell\ge0$. Therefore the numerical integration is stopped when $\|E_{\ell^\star+1}\|_F \ge \eps_{k+1}$ for some $\ell^\star$, and the last integration step size is tuned using the Newton method to get the equality. This is achieved as follows.
    
We define the function
\[
    g(h) = \|E_{\ell^\star+1}\|_F^2 - \eps_{k+1}^2 = \|E_{\ell^\star} - hG(E_{\ell^\star})\|_F^2 - \eps_{k+1}^2,
\]
and we aim to find its root $h^\star$ such that $g(h^\star)=0$. Its derivative is simply
\[
    g'(h) = - 2 \sum_{i,j=1}^n (E_{\ell^\star} - h G(E_{\ell^\star}))_{ij} G(E_{\ell^\star})_{ij}.
\]
Then, starting from $h_0=h/2$, we perform the Newton method
\[
    h_{i+1} = h_i - \frac{g(h_i)}{g'(h_i)}, \quad i = 0,1,\dots
\]
until we get the sought $h^\star$ and, consequently, $E_{\ell^\star+1} = E_{\ell^\star} - h^\star G(E_{\ell^\star})$. Eventually, $E_{\ell^\star+1}$ is normalised in Frobenius norm and set as the initial datum of the constrained gradient system \eqref{eq:derE} integration at perturbation size $\eps_{k+1}$.

\subsection{Extension to the two layer vector field}
We denote by $E_\star^1(\eps),E_\star^2(\eps)$ the stationary points of the gradient systems \eqref{eq:gs} and  we look for the smallest zero $\eps_\star$ of
\[
    f(\eps) = F_\eps(E_\star^1(\eps), E_\star^2(\eps)).
\]
As in the one layer case, to determine $\eps_\star$, we are left with a one-dimensional root-finding problem, for which we derive a Newton-like algorithm. We suppose that the following assumption, analogous to Assumption \ref{ass:1}, holds.

\begin{assumption}\label{ass:2}
    For $\eps<\eps_\star$, we assume that the eigenvalues $\lambda_i(\eps)$ of $\Sym(D_\star^{2}(A_2+\eps E_\star^2(\eps))D_\star^{1}(A_1+\eps E_\star^1(\eps)))$ are simple. Moreover, $E_\star^k(\eps)$, $\lambda_i(\eps)$ and the associated unit eigenvectors $x_i(\eps)$ are assumed to be smooth functions of $\eps$.
\end{assumption}

If $G_1(\eps),G_2(\eps)$ are the gradients of $F_\eps$ with respect to $E_\star^1(\eps),E_\star^2(\eps)$, then we have a result analogous to Lemma \ref{lem:fepsder}.

\begin{lemma}
For $\eps<\eps_\star$, under Assumption \ref{ass:2}, the function $f(\eps)=F_\eps(E_\star^1(\eps), E_\star^2(\eps))$ is differentiable, and its derivative equals
\begin{equation*}
    f'(\eps) = - ( \|G_1(\eps)\|_F + \|G_2(\eps)\|_F ).
\end{equation*}
\end{lemma}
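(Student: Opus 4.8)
The plan is to mirror the proof of Lemma~\ref{lem:fepsder} (i.e.\ the one-layer case, which the excerpt attributes to \citep[Lemma 3.5]{guglielmi2017matrix}) and simply keep track of the two independent variables $E_1$ and $E_2$. First I would differentiate $f(\eps) = F_\eps(E_\star^1(\eps), E_\star^2(\eps))$ with respect to $\eps$ using the chain rule, splitting the total derivative into three contributions: the explicit $\eps$-dependence of $F_\eps$ through the $\eps E_1$, $\eps E_2$ terms, and the two implicit contributions coming from the $\eps$-dependence of $E_\star^1(\eps)$ and $E_\star^2(\eps)$. Under Assumption~\ref{ass:2} the eigenvalues are simple and $E_\star^k(\eps)$ are smooth, so Lemma~\ref{lem:eigderiv} applies and all these derivatives are well defined; recall also that $D_\star^1, D_\star^2$ are locally constant (as in the Remark following \eqref{eq:min_prob}) and hence contribute nothing.

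Next I would treat the two implicit terms. Writing $\partial_{E_k} F_\eps = \eps G_k$ (from the derivation of \eqref{eq:gs}, where $G_k = \sum_i \gamma_i z_i^k (w_i^k)^\top$ is the free gradient in the $E_k$ slot), the implicit contribution is $\eps \langle G_1, \tfrac{d}{d\eps} E_\star^1(\eps)\rangle_F + \eps\langle G_2, \tfrac{d}{d\eps}E_\star^2(\eps)\rangle_F$. The key observation is that each $E_\star^k(\eps)$ lies on the unit sphere $\mathbb{S}_1$, so $\tfrac{d}{d\eps}E_\star^k(\eps)$ is orthogonal (in the Frobenius inner product) to $E_\star^k(\eps)$; and because $E_\star^k(\eps)$ is a stationary point of the inner iteration, Theorem~\ref{th:exchar} (its two-layer analogue, via \eqref{eq:gs}) tells us that $E_\star^k(\eps)$ is a real multiple of $G_k$ — equivalently, the projection of $G_k$ onto the tangent space of the sphere at $E_\star^k(\eps)$ vanishes. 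Hence $\langle G_k, \tfrac{d}{d\eps}E_\star^k(\eps)\rangle_F = 0$ for $k=1,2$, and both implicit terms drop out. This is the same cancellation that makes the one-layer formula clean.

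It then remains to evaluate the explicit term. Differentiating the eigenvalue $\lambda_i(\Sym(D_\star^2(A_2+\eps E_2)D_\star^1(A_1+\eps E_1)))$ with respect to the explicit $\eps$ via Lemma~\ref{lem:eigderiv} (holding $E_1, E_2, D_\star^1, D_\star^2$ fixed) produces, after the same inner-product manipulation used to derive $G_1$ and $G_2$, exactly $\langle G_1, E_\star^1(\eps)\rangle_F + \langle G_2, E_\star^2(\eps)\rangle_F$. Since each $E_\star^k(\eps) = -G_k/\|G_k\|_F$ (the stationary-point relation of \eqref{eq:gs} with the sign fixed so that $F_\eps$ decreased along the flow, exactly as in the one-layer argument), each inner product equals $-\|G_k\|_F$, giving $f'(\eps) = -(\|G_1(\eps)\|_F + \|G_2(\eps)\|_F)$ as claimed.

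The routine calculations — reconstructing the $\langle G_k, \dot E_k\rangle_F$ form from the eigenvalue derivative, and verifying signs — are exactly those already carried out in Section~\ref{sec:inner} for the two-layer gradients, so I would cite those rather than redo them. The only place needing a little care, and the main (mild) obstacle, is justifying that the stationary-point characterisation of Theorem~\ref{th:exchar} genuinely transfers to the coupled system \eqref{eq:gs}: one must note that $\dot E_k = 0$ for \emph{both} $k$ forces each $E_\star^k$ to be a multiple of the corresponding $G_k$, and that this is precisely what kills the tangential component of $G_k$; the coupling between the two layers enters only through the shared eigenvectors $x_i$ and does not obstruct the per-slot orthogonality argument. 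Once that is in hand the proof is a direct transcription of \citep[Lemma 3.5]{guglielmi2017matrix}.
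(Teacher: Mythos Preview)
Your proposal is correct and follows precisely the route the paper intends: the paper does not spell out a proof for this two-layer lemma but presents it as the direct analogue of Lemma~\ref{lem:fepsder}, whose proof is deferred to \citep[Lemma~3.5]{guglielmi2017matrix}. Your decomposition into an explicit $\eps$-term plus two implicit terms, the elimination of the latter via the stationary-point/tangent-space orthogonality, and the sign identification $E_\star^k=-G_k/\|G_k\|_F$ are exactly that argument carried slot-by-slot, and your remark that the coupling between layers enters only through the shared eigenvectors (hence does not obstruct the per-$k$ orthogonality) is the one genuinely new observation needed.
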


\section{Numerical experiments} \label{sec:num_exp}

We test our approach to increase the robustness of an image classifier based on a neural network that includes a neural ODE in its architecture.
We evaluate performance on four standard benchmark datasets, with representative samples shown in Figure \ref{fig:dataset-samples}. MNIST \citep{deng2012mnist} contains handwritten digits and serves as a fundamental benchmark for image processing systems. FashionMNIST \citep{xiao2017fashion} consists of Zalando product images across 10 clothing categories, providing a more challenging alternative to MNIST. CIFAR10 \citep{krizhevsky2009learning} comprises natural images from 10 classes (airplanes, cars, birds, cats, deer, dogs, frogs, horses, ships, and trucks), widely used for computer vision algorithm development. SVHN (Street View House Numbers) \citep{netzer2011reading} presents real-world digit recognition challenges using cropped house numbers from Google Street View images, offering significantly greater complexity than MNIST while maintaining a similar digit-focused task structure.

\begin{figure}[ht]
    \centering
    \begin{tabular}{cc cc}
        \rotatebox{90}{$\hspace{1.5em}$MNIST} & \includegraphics[width=.4\textwidth]{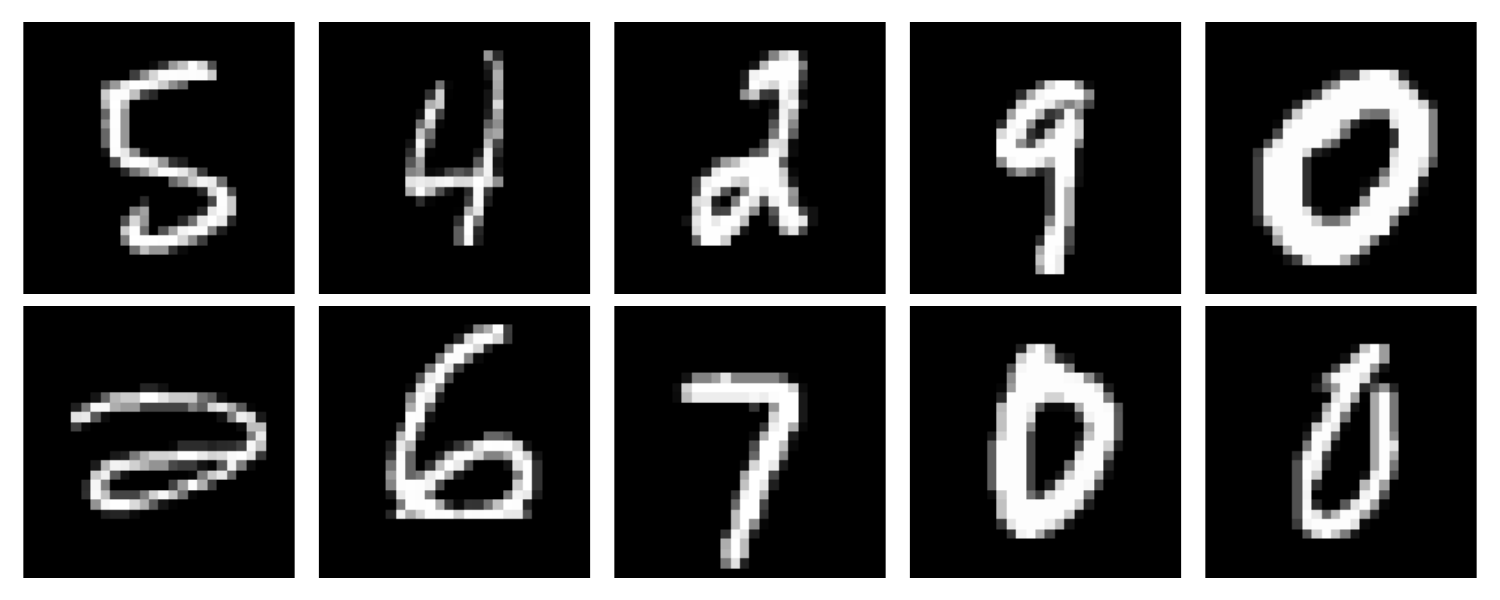} &
        \rotatebox{90}{$\hspace{1em}$FMNIST} & \includegraphics[width=.4\textwidth]{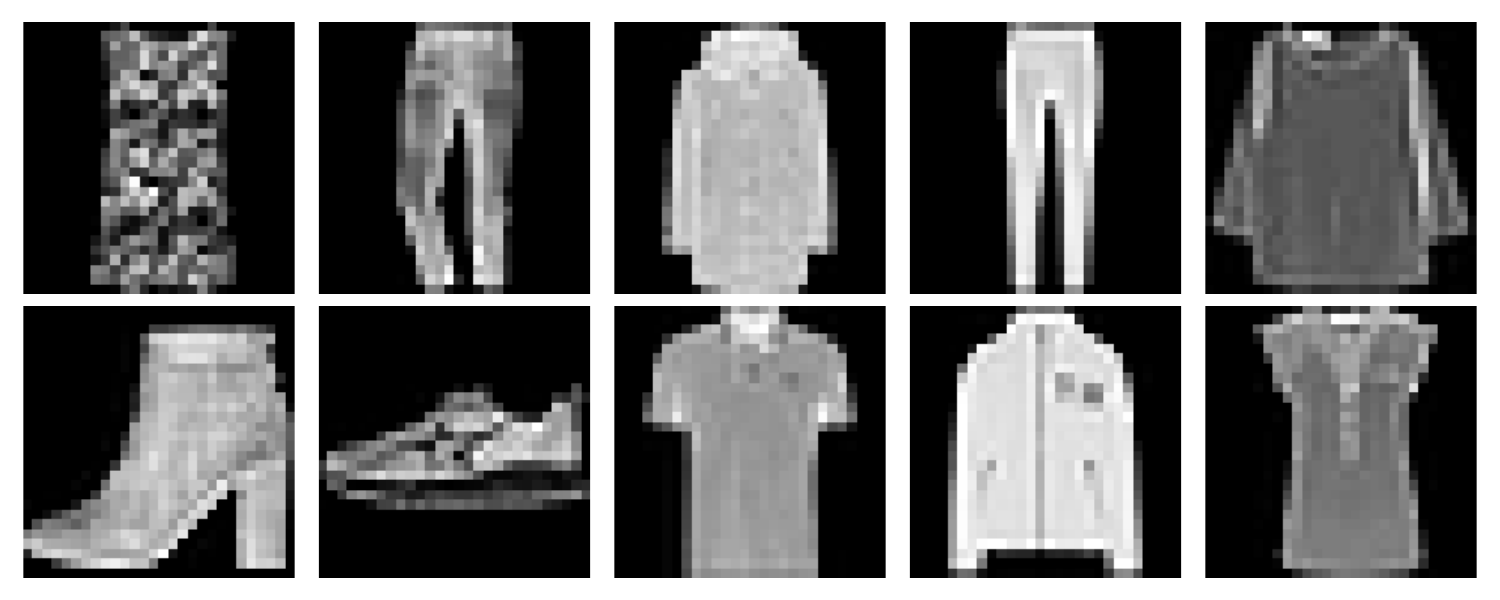} \\
        \rotatebox{90}{$\hspace{1em}$CIFAR10} & \includegraphics[width=.4\textwidth]{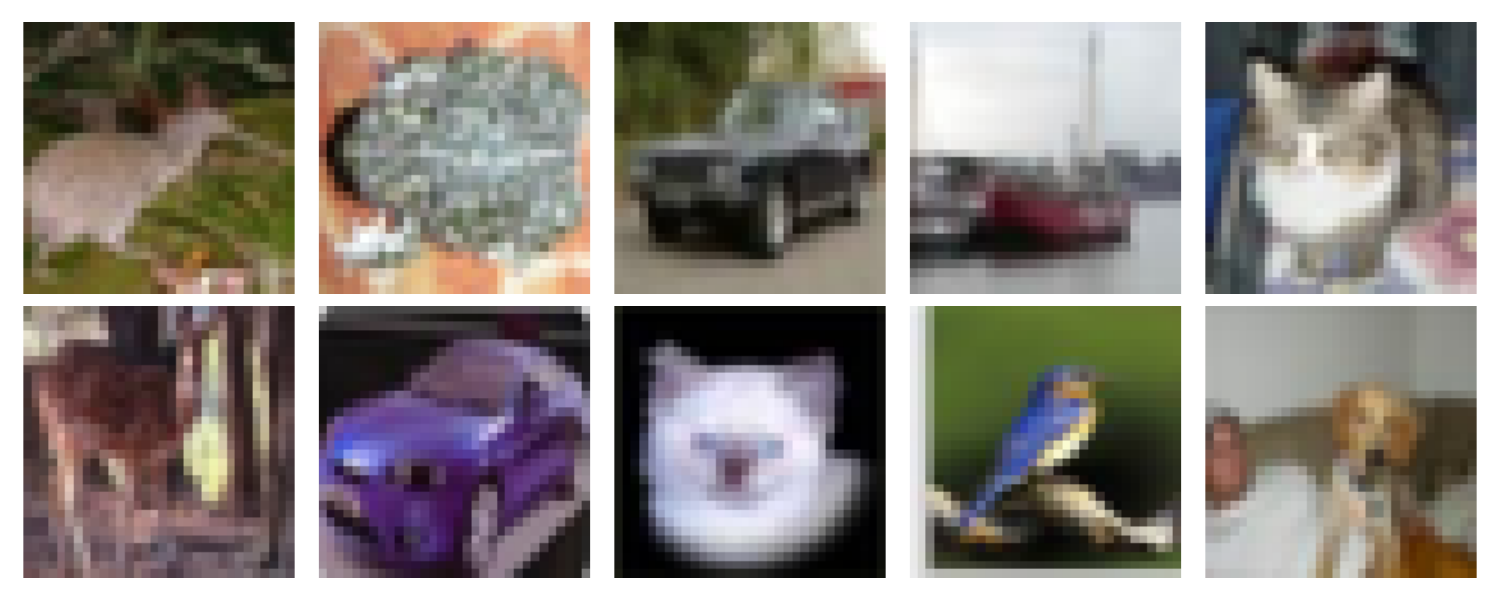} &
        \rotatebox{90}{$\hspace{2em}$SVHN} & \includegraphics[width=.4\textwidth]{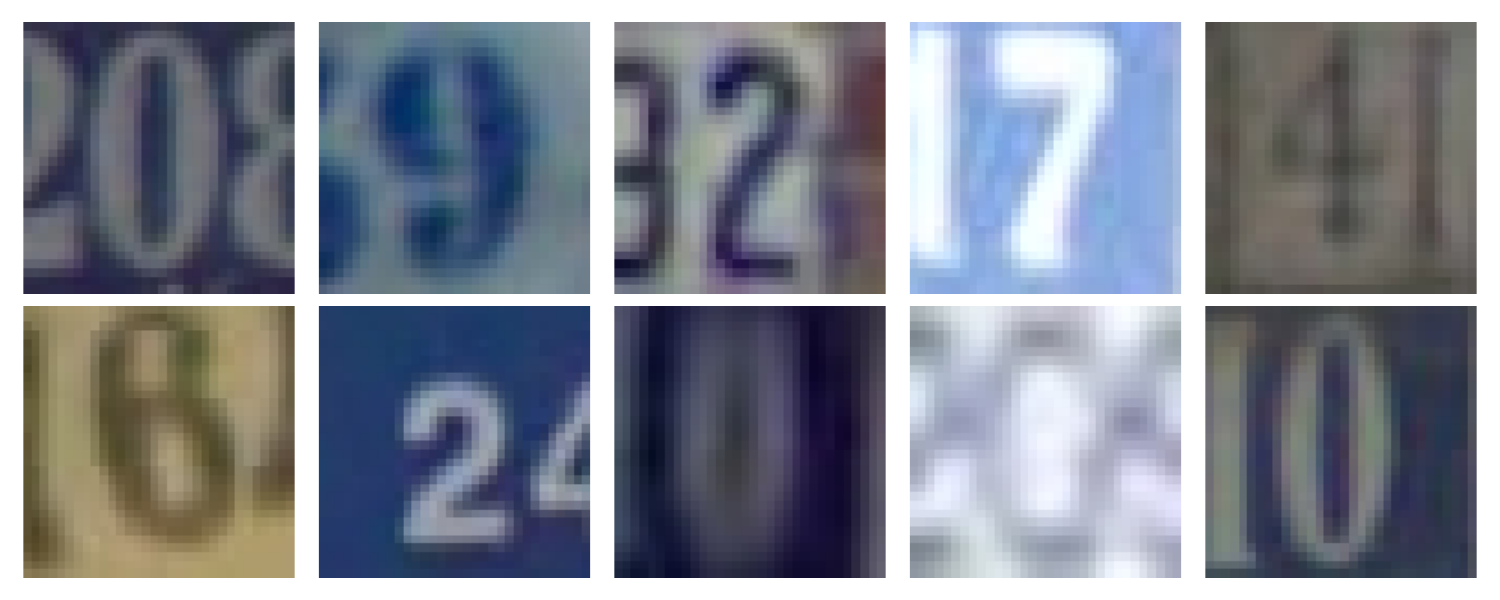} 
    \end{tabular}
    \caption{Sample images from the datasets MNIST, FashionMNIST, CIFAR10, and SVHN.}
    \label{fig:dataset-samples}
\end{figure}


%

\subsection{Architectures and training details}

\subsubsection{MNIST and FashionMNIST datasets}
MNIST and FashionMNIST consist of 70000 $28\times28$ grayscale images (60000 training images and 10000 testing images), that is, vectors of length 784 after vectorisation, grouped in 10 classes. We consider a simple neural network made up of the following blocks:
\begin{itemize}
    \item[(a)] a downsampling affine layer that reduces the dimension of the input from 784 to 64, i.e. a simple transformation of the kind $y = A_1x + b_1$,
    where $x\in\R^{784}$ is the input, $y\in\R^{64}$ is the output, and $A_1\in\R^{64\times 784}$ and $b_1\in\R^{64}$ are the parameters;
    \item[(b)] a neural ODE block that models the feature propagation,
    \[
        \begin{cases}
            \dot{x}(t) = \sigma\left(Ax(t)+b\right), \qquad t\in[0,1],\\
            x(0) = y,
        \end{cases}
    \]
    whose initial value is the output of the previous layer, where $x:[0,1]\to\R^{64}$ is the feature vector evolution function, $A\in\R^{64\times 64}$ and $b\in\R^{64}$ are the parameters, and $\sigma$ is a smoothed LeakyReLU activation function defined as follows:    {
    \[
        \sigma(z) =
        \begin{cases}
            z, \qquad &\mbox{if } z \geq 0,\\
            \tanh{z}, \qquad &\mbox{if }  -\bar{z} \le z < 0,\\
            \alpha z + \beta, \qquad &\mbox{otherwise},
        \end{cases}
    \]
    where $\bar{z}>0$ is such that $\tanh'{(\pm\bar{z})}=\alpha=0.1$ and $\beta\in\R$ such that $\alpha (-\bar{z}) + \beta = \tanh{(-\bar{z})}$ (see Figure \ref{fig:custom_act_fun});
    }
    \begin{figure}[ht]
        \centering
        \includegraphics[width=0.5\textwidth]{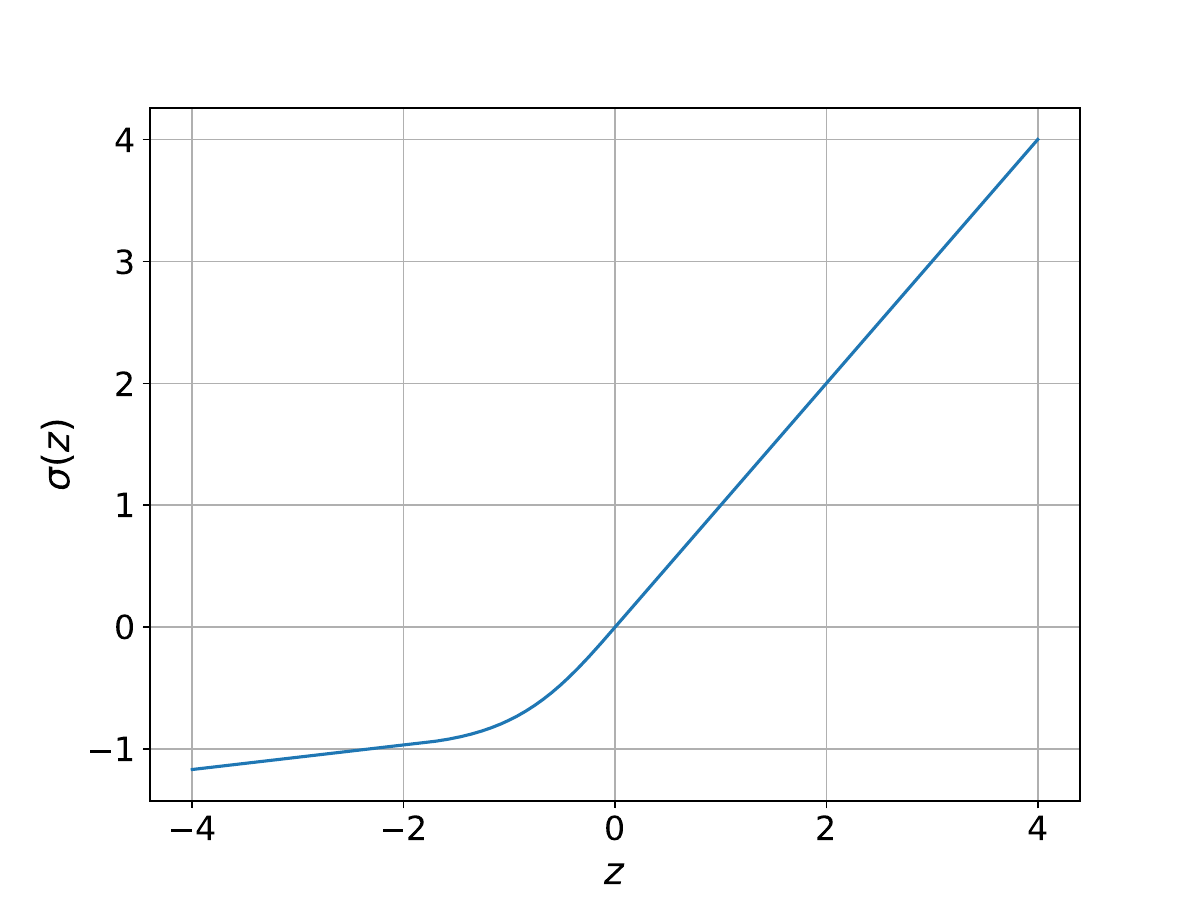}
        \caption{Custom activation function: a smoothed LeakyReLU with minimal slope $\alpha=0.1$.}
        \label{fig:custom_act_fun}
    \end{figure}
    \item[(c)] a final classification layer that reduces the dimension of the input from 64 to 10, followed by the softmax activation function
    \[
        x_{\text{out}} = \mbox{softmax}\left(A_2x(1)+b_2\right),
    \]
    where $x(1)\in\R^{64}$ is the output of the neural ODE block, $A_2\in\R^{10\times 64}$ and $b_2\in\R^{10}$ are the parameters, and $x_{\text{out}}$ is the output vector whose component $i$ is the probability that the input $x$ belongs to the class $i$. We recall that $\mbox{softmax}$ is a vector-valued function that maps the vector $x$ into the vector $\mbox{softmax}(x)=e^x\|e^x\|_1^{-1}$, where exponentiation is done entrywise.
\end{itemize}
If we call $\ell_1$ the transformation at point (a), $\phi_1$ the flow of the differential equation at point (b) at time 1, and $\ell_2$ the transformation at point (c), then the neural network model $\phi$ can be written as
\[
    \phi = \ell_2 \circ \phi_1 \circ \ell_1 = \mbox{softmax}\left(A_2 \phi_1 \left(A_1 \bullet + b_1\right) + b_2\right),
\]
and it holds that, for any vectors $x,y$,
\[
    \|\phi(x)-\phi(y) \|  \le L_{\text{softmax}} \|A_2 \|  L_{\phi_1} \|A_1 \|  \|x-y \| ,
\]
where $L_{\text{softmax}}$ and $L_{\phi_1}$ are the Lipschitz constants of $\mbox{softmax}$ and $\phi_1$ respectively. We note that, it is easy to show that $L_{\text{softmax}} = 1$, and that
\[
    L_{\phi_1} \le \exp ( \max_{D\in\Omega_m} \mu_2(DA) ).
\]
If we set $\|A_2 \| =1$, then
\[
    \|\phi(x)-\phi(y) \|  \le L_{\phi_1} \|A_1 \|  \|x-y \|  \le \exp \left( \max_{D\in\Omega_m} \mu_2(DA) \right) \|A_1 \|  \|x-y \| ,
\]
i.e. the Lipschitz constant of $\phi$ is bounded by $\exp ( \max_{D\in\Omega_m} \mu_2(DA) ) \|A_1 \| $, which we are able to tune as we like thanks to the proposed two-level method. In particular, setting it equal to a constant of moderate size, we make sure that small perturbations in the input yield only small changes in the output. This increases the robustness of the model $\phi$.

We now build two alternative versions of the above-mentioned model on MNIST and FashionMNIST datasets.
\begin{itemize}
    \item We train the first one by using the classical stochastic gradient descent for 70 epochs leading to the classical model, and we call it \texttt{ODEnet} from here on. There is no constraint on $\|A_1 \| $ so far.
    \item Then, we build the second version of the model as follows.
    \begin{enumerate}
        \item We retain the parameters of the classical one and we fix $\|A_1 \| $ to the value obtained after training in the previous step.
        \item We apply the two-level method to the weight matrix $A$ in (b) to impose that $\exp ( \max_{D\in\Omega_m} \mu_2(DA) ) \|A_1 \| $ has moderate size. Specifically, given a target value $ \delta \in\R$, we compute $\eps_\star>0$ and $E_\star(\eps_\star)\in\mathbb{S}_1$ such that
        \begin{equation}\label{eq:pert}
            \hat{A} = A + \eps_\star  E_\star(\eps_\star)
        \end{equation}
        realizes the condition $\max_{D\in\Omega_m}\mu_2(D \hat{A}) =  \delta $.
        \item Once fixed the parameter $\hat{A}$ in place of $A$ in (b), we train the second version of the model by computing all other parameters by means of the classical stochastic gradient descent for 70 epochs, with $\|A_1 \| >1$ fixed as in point 1, and $\|A_2 \| =1$.
    \end{enumerate}
    This is the proposed stabilised model, and we call it \texttt{stabODEnet} from here on. 
\end{itemize}


\subsubsection{CIFAR10 and SVHN datasets}
CIFAR10 contains 60,000 RGB images of size $3\times32\times32$ (50,000 for training and 10,000 for testing) distributed across 10 classes, while SVHN comprises 99,289 RGB images of the same dimensions (73,257 for training and 26,032 for testing) also spanning 10 classes.

To accommodate the increased complexity of CIFAR10 and SVHN compared to the simpler grayscale datasets, we modify our model architecture by incorporating convolutional operators in the initial downsampling component. Specifically, the adapted architecture is defined as follows. The first downsampling block transforms the input $x\in\R^{3\times32\times32}$ into a feature vector $y\in\R^{64}$ through three convolutional layers, specifically,
\[
    y = A_3\sigma(A_2\sigma(A_1x+b_1)+b_2)+b_3,
\]
where $A_1,A_2,A_3$ are convolution operators with kernel size 5 and stride 2, $b_1,b_2,b_3$ are the corresponding biases and $\sigma(z) = \text{ReLU}(z):=\max\{0,z\}$. This is followed by a neural ODE layer and a classification layer, with the chosen smoothed LeakyReLU activation function in Figure \ref{fig:custom_act_fun}, as before.

We note that analogous computations as the one above yield 
\[
    \|\phi(x)-\phi(y) \|  \le L_{\text{softmax}} \|A_4 \|  L_{\phi_1} \|A_3 \|  L_{\sigma} \|A_2 \|  L_{\sigma} \|A_1 \|  \|x-y \| ,
\]
for any vectors $x,y$, where $\phi$ denotes the neural network model. Thus, if we set $\|A_4 \| =1$, we obtain the bound 
\begin{align*}
    \|\phi(x)-\phi(y) \|  &\le \|A_1 \|  \|A_2 \|  \|A_3 \|  L_{\phi_1} \|x-y \|  \\ &\le \exp ( \max_{D\in\Omega_m} \mu_2(DA) ) \|A_1 \|  \|A_2 \|  \|A_3 \|  \|x-y \| .
\end{align*}
In other words, the Lipschitz constant of $\phi$ is bounded in this case by
\[
    \exp ( \max_{D\in\Omega_m} \mu_2(DA) ) \|A_1 \|  \|A_2 \|  \|A_3 \| ,
\]
which we are able to tune as we like thanks to the proposed two-level method. In particular, setting it equal to a constant of moderate size, we make sure that small perturbations in the input yield only small changes in the output. This increases the robustness of the model $\phi$.

In order to verify this in practice, we now proceed as done for the MNIST and FashionMNIST datasets. 
\begin{itemize}
    \item We first train the neural network using the Adam optimiser for 140 epochs for CIFAR10 and for 100 epochs for SVHN, leading to standard models which we call  \texttt{ODEnet}. There are no constraints on $\|A_1 \| $, $\|A_2 \| $ and $\|A_3 \| $ so far.
    \item Then, we build a second, robust version of these models, proceeding as follows.
    \begin{enumerate}
        \item We fix $\|A_1 \| $, $\|A_2 \| $ and $\|A_3 \| $ to the values obtained after training in the previous step.
        \item We apply the two-level method to the weight matrix $A$ defining the neural ODE in (b) to compute $\eps_\star>0$ and $E_\star(\eps_\star)\in\mathbb{S}_1$ such that
        \begin{equation}\label{eq:pert1}
            \hat{A} = A + \eps_\star  E_\star(\eps_\star)
        \end{equation}
        realises the condition $\max_{D\in\Omega_m}\mu_2(D \hat{A}) =  \delta $.
        \item Once fixed the parameter $\hat{A}$ in place of $A$ in (b), we train the model again computing all other parameters by means of the Adam optimiser for 140 epochs for CIFAR10 and for 100 epochs for SVHN, with $\|A_1 \| >1$, $\|A_2 \| >1$ and $\|A_3 \| >1$ fixed as in point 1. and $\|A_4 \| =1$.
    \end{enumerate}
    This leads to the stabilised model \texttt{stabODEnet}.
\end{itemize}

\begin{remark}
    The stabilised model requires approximately twice the training time of the classical model due to its two-stage training procedure. First, we train the classical model, then apply the operation in \eqref{eq:pert} or \eqref{eq:pert1}, and finally train the stabilised model. The primary computational overhead stems from computing $E_\star(\eps_\star)$ in \eqref{eq:pert} or \eqref{eq:pert1}, with cost scaling according to the dimension of the matrix $A$. For our experimental setting with $A\in\mathbb{R}^{64\times 64}$, this computation requires only a few seconds and is negligible compared to the overall training time, confirming that the stabilised model's training time is effectively twice that of the classical model. However, for larger matrices such as $A\in\mathbb{R}^{1000\times 1000}$, computing $E_\star(\eps_\star)$ requires several minutes, representing a more substantial computational overhead.
\end{remark}

In our experiments, we compare the accuracy of the models, i.e. the percentage of correctly classified testing images, as a function of a parameter $\eta>0$. In particular, we consider the Fast Gradient Sign Method (FGSM) and the Fast Gradient Method (FGM) adversarial attacks \citep{biggio2013evasion,goodfellow2014explaining,szegedy2013intriguing} and we denote by $\eta$ either
\begin{itemize}
    \item the size in the $\ell_\infty$-norm of the FGSM attack to each testing image or
    \item the size in the $\ell_2$-norm of the FGM attack to each testing image,
\end{itemize}
i.e. if the vector $x$ is a testing image and $\delta_x$ is the attack computed with the FGSM such that $\|\delta_x\|_\infty=1$ or with the FGM such that $\|\delta_x\|_2=1$, then $x+\eta\delta_x$ is the resulting perturbed testing image.

\subsection{Parameter selection} 
We employ a four-fold cross-validation strategy to select the optimal value of the parameter $ \delta $ for our \texttt{stabODEnet} model. This approach is motivated by our expectation of a trade-off between model accuracy and adversarial robustness when varying $ \delta $. Specifically, we anticipate that higher values of $ \delta $ will yield improved clean accuracy, but potentially compromise the model's stability and robustness against adversarial perturbations, while lower values should enhance robustness at the expense of standard performance. Furthermore, we expect the extent and nature of this trade-off to be dataset-dependent, necessitating careful parameter tuning for each specific problem domain.

Our cross-validation procedure operates as follows. We partition the training dataset into four equal batches. In each fold, we use three batches for model training while reserving the fourth batch for validation across different candidate values of $ \delta $. This process is repeated four times, with each batch serving as the validation set exactly once. We then select the $ \delta $ value that achieves the highest average validation performance across all four folds for each specific attack magnitude $\eta$.

Figure \ref{fig:inf_norm} illustrates the average validation accuracy of \texttt{stabODEnet} over the four cross-validation folds for MNIST, FashionMNIST, CIFAR10 and SVHN datasets, respectively, when $\eta$ is the size of the FGSM attack. Figure \ref{fig:2_norm} illustrates the same when $\eta$ is the size of the FGM attack. These results confirm our expected accuracy-robustness trade-off and demonstrate how the optimal parameter choice varies both with attack strength and across different datasets.

\begin{figure}[ht]
    \centering
    \begin{subfigure}{0.425\textwidth}
        \centering
        \includegraphics[width=\textwidth]{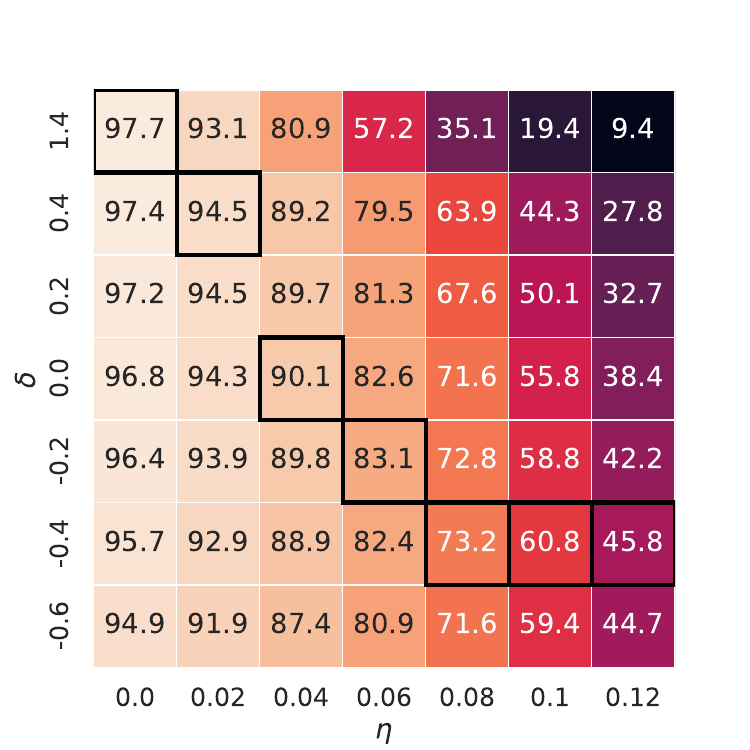}
        \caption{MNIST}
    \end{subfigure}
    \begin{subfigure}{0.425\textwidth}
        \centering
        \includegraphics[width=\textwidth]{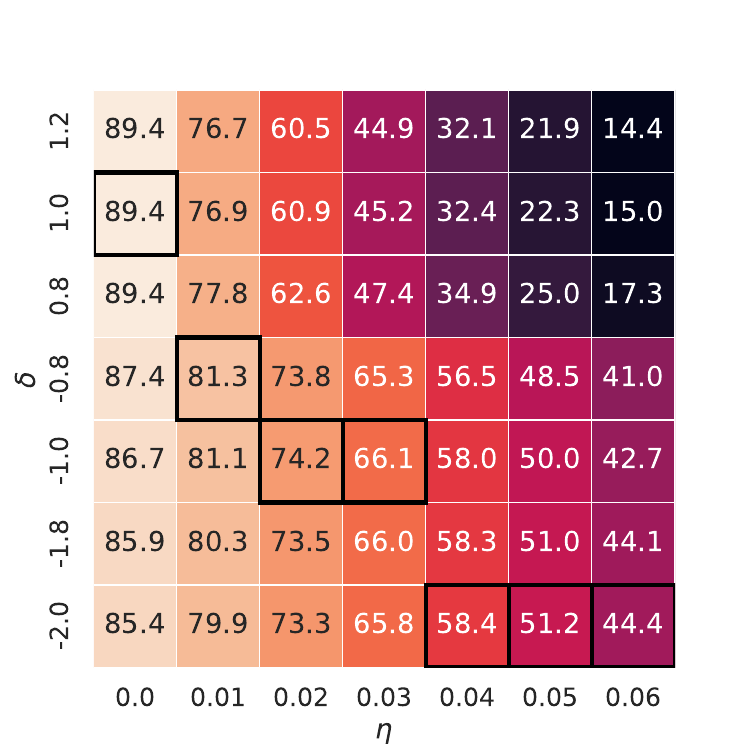}
        \caption{FashionMNIST}
    \end{subfigure}
    \\
    \begin{subfigure}{0.425\textwidth}
        \centering
        \includegraphics[width=\textwidth]{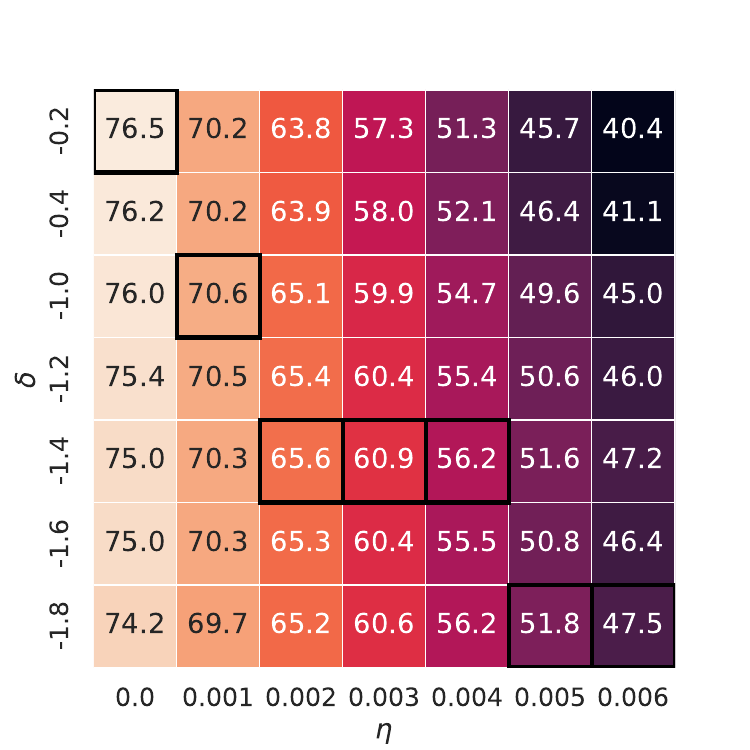}
        \caption{CIFAR10}
    \end{subfigure}
    \begin{subfigure}{0.425\textwidth}
        \centering
        \includegraphics[width=\textwidth]{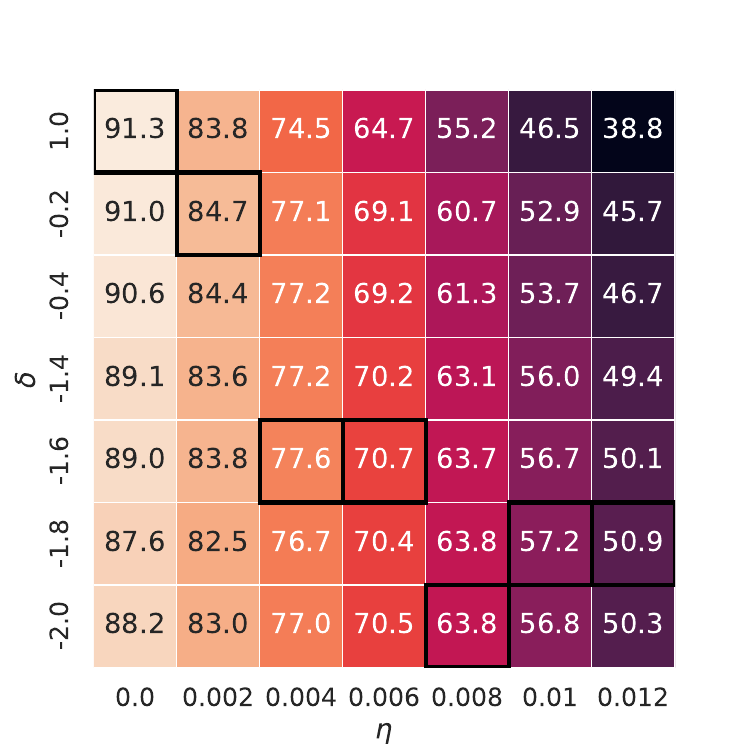}
        \caption{SVHN}
    \end{subfigure}
    \caption{Average validation accuracy of \texttt{stabODEnet} across four cross-validation folds for various $ \delta $ values, plotted as a function of FGSM attack magnitude $\eta$ (measured in $\ell_\infty$ norm). Each row represents a different $ \delta $ setting, and the highest average validation accuracy, rounded to one decimal place, for each attack magnitude is highlighted.}
    \label{fig:inf_norm}
\end{figure}

\begin{figure}[ht]
    \centering
    \begin{subfigure}{0.425\textwidth}
        \centering
        \includegraphics[width=\textwidth]{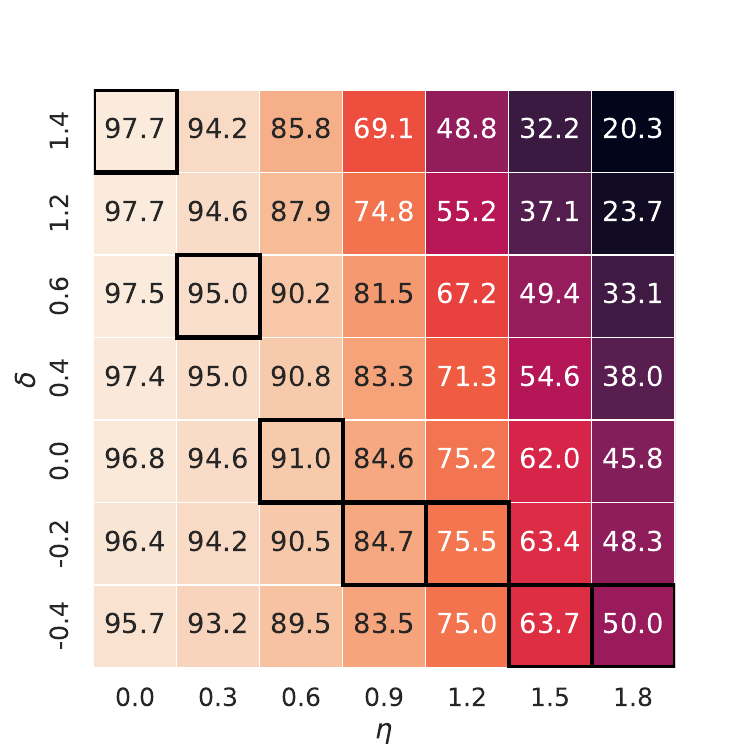}
        \caption{MNIST}
    \end{subfigure}
    \begin{subfigure}{0.425\textwidth}
        \centering
        \includegraphics[width=\textwidth]{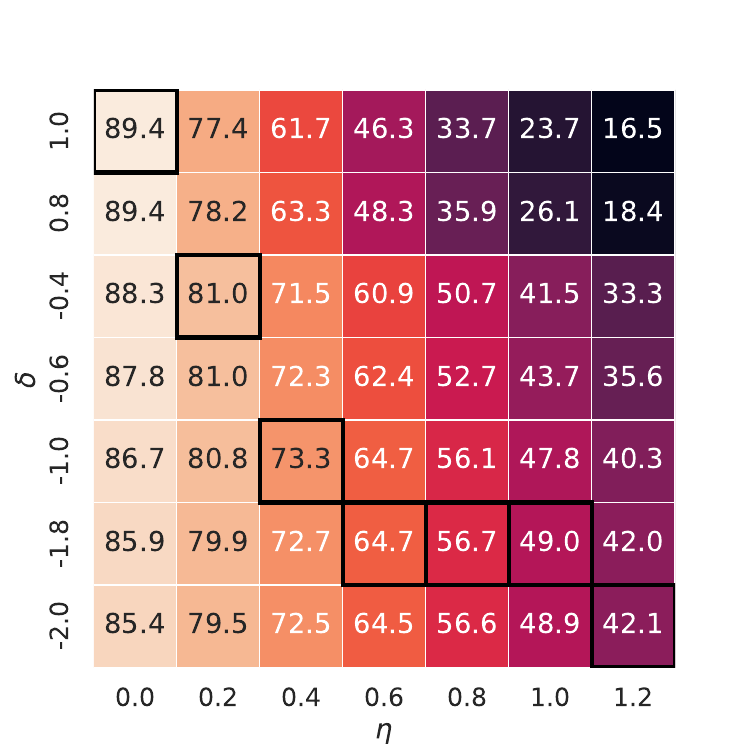}
        \caption{FashionMNIST}
    \end{subfigure}
    \\
    \begin{subfigure}{0.425\textwidth}
        \centering
        \includegraphics[width=\textwidth]{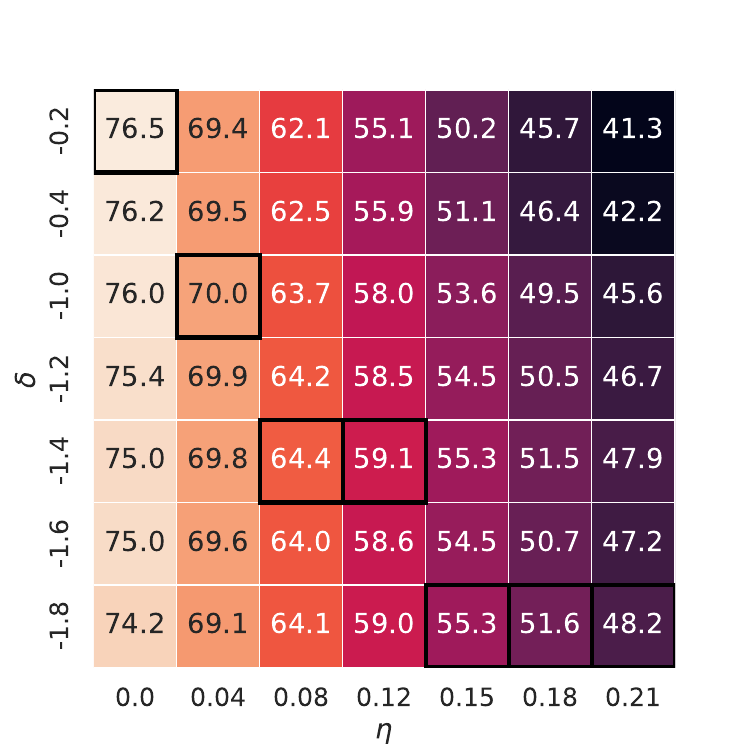}
        \caption{CIFAR10}
    \end{subfigure}
    \begin{subfigure}{0.425\textwidth}
        \centering
        \includegraphics[width=\textwidth]{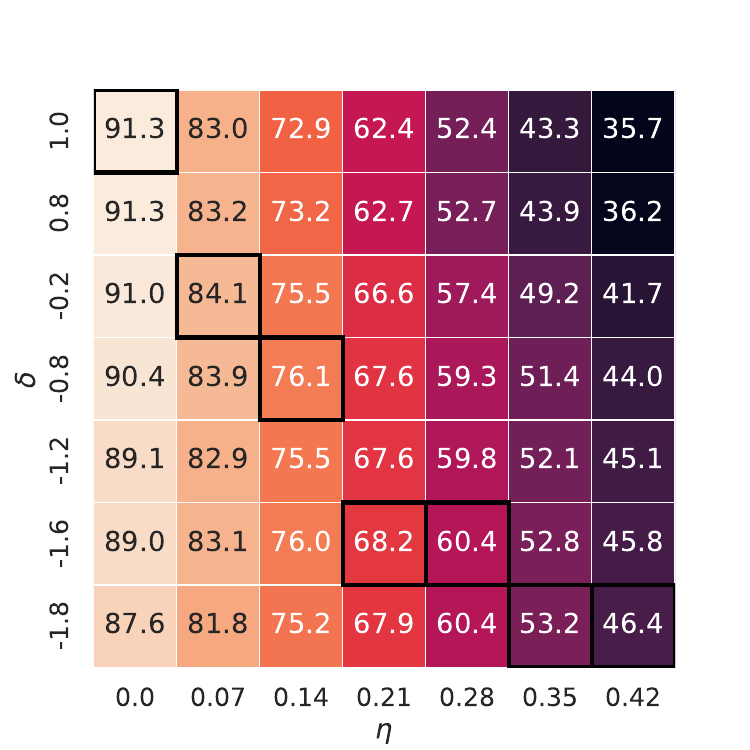}
        \caption{SVHN}
    \end{subfigure}
    \caption{Average validation accuracy of \texttt{stabODEnet} across four cross-validation folds for various $ \delta $ values, plotted as a function of FGM attack magnitude $\eta$ (measured in $\ell_2$ norm). Each row represents a different $ \delta $ setting, and the highest average validation accuracy, rounded to one decimal place, for each attack magnitude is highlighted.}
    \label{fig:2_norm}
\end{figure}

\subsection{Baselines and evaluation}\label{sec:models}

We evaluate our approach against several state-of-the-art baseline methods from the literature. Specifically, we consider the following approaches.
\begin{itemize}
    \item \textbf{\texttt{KangODEnet}}. Based on the work of \cite{kang2021stable}, this approach designs neural ODEs with equilibrium points corresponding to classification classes, ensuring Lyapunov stability so that solutions for perturbed inputs converge to the same result as unperturbed ones. The authors choose matrix $A_2$ in component (c) to minimise the maximum cosine similarity between representations from different classes, enhancing adversarial robustness. Since their specific neural ODE formulation in component (b) differs fundamentally from ours, we adopt only their $A_2$ design from component (c) to ensure fair comparison, referring to this variant as \texttt{KangODEnet}.

    \item \textbf{\texttt{nsdODEnet}}. This approach implements the structured neural ODE framework proposed by \cite{haber2017stable,ruthotto2020deep,celledoni2023dynamical,sherry2024designing}, defined as
    \[
        \dot{x}(t) = -A^\top \sigma (A x(t) + b), \quad t\in[0,1].
    \]
    This formulation is nonexpansive by design, providing inherent robustness against adversarial attacks. The prefix \texttt{nsd} denotes that the Jacobian matrix is negative semi-definite.

    \item \textbf{\texttt{shiftODEnet}}. Following \cite{guglielmi2024contractivity}, this method stabilises the weight matrix $A$ in component (b) by adding a perturbation $\Delta$ consisting of a suitable multiple of the identity matrix. We note that \cite{guglielmi2024contractivity} does not focus specifically on optimal stabilisation of weight matrix $A$, so this comparison serves primarily as an illustrative baseline to demonstrate the effectiveness of alternative stabilisation strategies.
\end{itemize}

In Tables \ref{tab:MNIST} and \ref{tab:FashionMNIST} we report the results for MNIST and FashionMNIST datasets respectively. For \texttt{stabODEnet\_d}, the perturbation matrix $E_\star(\eps_\star)\in\mathbb{S}_1\cap\mathbb{D}^{64\times 64}$ in \eqref{eq:pert} is set to be a diagonal matrix. The optimal choice of $ \delta $ for \texttt{stabODEnet\_d} is carried out as for \texttt{stabODEnet}. 
Tables \ref{tab:MNIST} and \ref{tab:FashionMNIST} show that the proposed models \texttt{stabODEnet} and \texttt{stabODEnet\_d} outperform the models \texttt{KangODEnet}, \texttt{nsdODEnet} and \texttt{shiftODEnet}. It is also interesting to notice that for the MNIST dataset the best model is \texttt{stabODEnet}, while for the FashionMNIST dataset the best model is \texttt{stabODEnet\_d}.

In Tables \ref{tab:CIFAR10} and \ref{tab:SVHN} we report the results for CIFAR10 and SVHN datasets respectively. For \texttt{stabODEnet\_d}, the perturbation matrix $E_\star(\eps_\star)\in\mathbb{S}_1\cap\mathbb{D}^{64\times 64}$ in \eqref{eq:pert} is set to be a diagonal matrix. The optimal choice of $ \delta $ for \texttt{stabODEnet\_d} is carried out as for \texttt{stabODEnet}. 
Tables \ref{tab:CIFAR10} and \ref{tab:SVHN} show that the proposed models \texttt{stabODEnet} and \texttt{stabODEnet\_d} outperform the models \texttt{KangODEnet}, \texttt{nsdODEnet} and \texttt{shiftODEnet}. It is also interesting to notice that for CIFAR10 and SVHN datasets the models \texttt{stabODEnet} and \texttt{stabODEnet\_d} behave similarly.

\begin{table}[ht]
    \centering
    \resizebox{0.85\textwidth}{!}{
    \begin{tabular}{c c c c c c c c}
        \toprule
        $\eta$ ($\ell_\infty$-norm) & 0 & 0.02 & 0.04 & 0.06 & 0.08 & 0.10 & 0.12 \\
        \midrule
        \texttt{ODEnet} & 0.9767 & 0.9205 & 0.7470 & 0.4954 & 0.2872 & 0.1426 & 0.0620 \\
        \midrule
        \texttt{stabODEnet} & 0.9772 & \textbf{0.9468} & \textbf{0.9033} & \textbf{0.8371} & \textbf{0.7408} & \textbf{0.6193} & \textbf{0.4689} \\
        \texttt{stabODEnet\_d} & \textit{0.9779} & \textit{0.9402} & \textit{0.8813} & \textit{0.7957} & \textit{0.6847} & \textit{0.5749} & \textit{0.4671} \\
        \midrule
        \texttt{KangODEnet} & \textbf{0.9791} & 0.9251 & 0.7775 & 0.5233 & 0.3048 & 0.1552 & 0.0707 \\
        \texttt{nsdODEnet} & 0.9020 & 0.8648 & 0.8108 & 0.7451 & 0.6571 & 0.5562 & 0.4372 \\
        \texttt{shiftODEnet} & 0.9101 & 0.8730 & 0.8207 & 0.7532 & 0.6687 & 0.5623 & 0.4403 \\
        \bottomrule
        \toprule
        $\eta$ ($\ell_2$-norm) & 0 & 0.3 & 0.6 & 0.9 & 1.2 & 1.5 & 1.8 \\
        \midrule
        \texttt{ODEnet} & 0.9767 & 0.9356 & 0.8283 & 0.6411 & 0.4824 & 0.3839 & 0.3281 \\
        \midrule
        \texttt{stabODEnet} & 0.9772 & \textbf{0.9504} & \textbf{0.9121} & \textbf{0.8516} & \textbf{0.7633} & \textbf{0.6405} & \textbf{0.5026} \\
        \texttt{stabODEnet\_d} & \textit{0.9779} & \textit{0.9479} & \textit{0.8985} & \textit{0.8230} & \textit{0.7253} & \textit{0.5973} & \textit{0.4763} \\
        \midrule
        \texttt{KangODEnet} & \textbf{0.9791} & 0.9392 & 0.8431 & 0.6591 & 0.4863 & 0.3736 & 0.3108 \\
        \texttt{nsdODEnet} & 0.9020 & 0.8666 & 0.8135 & 0.7490 & 0.6625 & 0.5580 & 0.4352 \\
        \texttt{shiftODEnet} & 0.9101 & 0.8746 & 0.8242 & 0.7602 & 0.6756 & 0.5693 & 0.4479 \\
        \bottomrule
    \end{tabular}}
    \caption{MNIST dataset. Comparison of test accuracy across different models as a function of the magnitude $\eta$ of the FGSM and FGM adversarial attacks. In bold and in italic the best accuracy and the second best accuracy for each perturbation size $\eta$, respectively.}
    \label{tab:MNIST}
\end{table}

\begin{table}[ht]
    \centering
    \resizebox{0.85\textwidth}{!}{
    \begin{tabular}{c c c c c c c c}
        \toprule
        $\eta$ ($\ell_\infty$-norm) & 0 & 0.01 & 0.02 & 0.03 & 0.04 & 0.05 & 0.06 \\
        \midrule
        \texttt{ODEnet} & 0.8824 & 0.7501 & 0.5807 & 0.4197 & 0.2853 & 0.1801 & 0.1094 \\
        \midrule
        \texttt{stabODEnet} & \textit{0.8844} & \textbf{0.8026} & 0.7307 & 0.6511 & 0.5727 & 0.5049 & 0.4396 \\
        \texttt{stabODEnet\_d} & 0.8835 & \textit{0.7954} & \textbf{0.7351} & \textbf{0.6832} & \textbf{0.6281} & \textbf{0.5761} & \textbf{0.5301} \\ 
        \midrule
        \texttt{KangODEnet} & \textbf{0.8846} & 0.7675 & 0.6129 & 0.4538 & 0.3163 & 0.2114 & 0.1398 \\
        \texttt{nsdODEnet} & 0.8332 & 0.7769 & 0.7089 & 0.6286 & 0.5495 & 0.4797 & 0.4143 \\
        \texttt{shiftODEnet} & 0.8404 & 0.7918 & \textit{0.7311} & \textit{0.6619} & \textit{0.5885} & \textit{0.5217} & \textit{0.4597} \\
        \bottomrule
        \toprule
        $\eta$ ($\ell_2$-norm) & 0 & 0.2 & 0.4 & 0.6 & 0.8 & 1.0 & 1.2 \\
        \midrule
        \texttt{ODEnet} & 0.8824 & 0.7548 & 0.5906 & 0.4340 & 0.3043 & 0.2060 & 0.1358 \\
        \midrule
        \texttt{stabODEnet} & \textit{0.8844} & \textbf{0.7987} & 0.7222 & 0.6373 & 0.5584 & 0.4836 & 0.4159 \\
        \texttt{stabODEnet\_d} & 0.8835 & \textit{0.7928} & \textbf{0.7315} & \textbf{0.6753} & \textbf{0.6186} & \textbf{0.5650} & \textbf{0.5154} \\ 
        \midrule
        \texttt{KangODEnet} & \textbf{0.8846} & 0.7709 & 0.6203 & 0.4655 & 0.3314 & 0.2291 & 0.1580 \\
        \texttt{nsdODEnet} & 0.8332 & 0.7732 & 0.7002 & 0.6140 & 0.5330 & 0.4594 & 0.3890 \\
        \texttt{shiftODEnet} & 0.8404 & 0.7888 & \textit{0.7238} & \textit{0.6483} & \textit{0.5729} & \textit{0.5024} & \textit{0.4359} \\
        \bottomrule
    \end{tabular}}
    \caption{FashionMNIST dataset. Comparison of test accuracy across different models as a function of the magnitude $\eta$ of the FGSM and FGM adversarial attacks. In bold and in italic the best accuracy and the second best accuracy for each perturbation size $\eta$, respectively.}
    \label{tab:FashionMNIST}
\end{table}

\begin{table}[ht]
    \centering
    \resizebox{0.85\textwidth}{!}{
    \begin{tabular}{c c c c c c c c}
        \toprule
        $\eta$ ($\ell_\infty$-norm) & 0 & 0.001 & 0.002 & 0.003 & 0.004 & 0.005 & 0.006 \\
        \midrule
        \texttt{ODEnet} & 0.7462 & 0.6663 & 0.5851 & 0.5073 & 0.4388 & 0.3754 & 0.3211 \\
        \midrule
        \texttt{stabODEnet} & \textbf{0.7623} & \textit{0.7023} & \textbf{0.6520} & \textit{0.6031} & \textit{0.5556} & \textbf{0.5154} & \textit{0.4727} \\
        \texttt{stabODEnet\_d} & \textit{0.7586} & \textbf{0.7046} & \textit{0.6500} & \textbf{0.6039} & \textbf{0.5591} & \textit{0.5136} & \textbf{0.4787} \\
        \midrule
        \texttt{KangODEnet} & 0.7406 & 0.6573 & 0.5754 & 0.4955 & 0.4256 & 0.3598 & 0.3039 \\
        \texttt{nsdODEnet} & 0.7515 & 0.6954 & 0.6356 & 0.5755 & 0.5154 & 0.4604 & 0.4103 \\
        \texttt{shiftODEnet} & 0.7513 & 0.6978 & 0.6410 & 0.5842 & 0.5287 & 0.4792 & 0.4330 \\
        \bottomrule
        \toprule
        $\eta$ ($\ell_2$-norm) & 0 & 0.04 & 0.08 & 0.12 & 0.15 & 0.18 & 0.21 \\
        \midrule
        \texttt{ODEnet} & 0.7462 & 0.6556 & 0.5646 & 0.4832 & 0.4277 & 0.3788 & 0.3339 \\
        \midrule
        \texttt{stabODEnet} & \textbf{0.7623} & \textit{0.6958} & \textbf{0.6401} & \textit{0.5853} & \textit{0.5505} & \textbf{0.5153} & \textit{0.4797} \\
        \texttt{stabODEnet\_d} & \textit{0.7586} & \textbf{0.6979} & \textit{0.6398} & \textbf{0.5888} & \textbf{0.5510} & \textit{0.5139} & \textbf{0.4845} \\
        \midrule
        \texttt{KangODEnet} & 0.7406 & 0.6462 & 0.5539 & 0.4694 & 0.4133 & 0.3586 & 0.3125 \\
        \texttt{nsdODEnet} & 0.7515 & 0.6877 & 0.6214 & 0.5546 & 0.5056 & 0.4600 & 0.4182 \\
        \texttt{shiftODEnet} & 0.7513 & 0.6905 & 0.6260 & 0.5633 & 0.5188 & 0.4784 & 0.4396 \\
        \bottomrule
    \end{tabular}}
    \caption{CIFAR10 dataset. Comparison of test accuracy across different models as a function of the magnitude $\eta$ of the FGSM and FGM adversarial attacks. In bold and in italic the best accuracy and the second best accuracy for each perturbation size $\eta$, respectively.}
    \label{tab:CIFAR10}
\end{table}

\begin{table}[ht]
    \centering
    \resizebox{0.85\textwidth}{!}{
    \begin{tabular}{c c c c c c c c}
        \toprule
        $\eta$ ($\ell_\infty$-norm) & 0 & 0.002 & 0.004 & 0.006 & 0.008 & 0.010 & 0.012 \\
        \midrule
        \texttt{ODEnet} & 0.9173 & 0.8230 & 0.6978 & 0.5719 & 0.4563 & 0.3598 & 0.2820 \\
        \midrule
        \texttt{stabODEnet} & \textbf{0.9218} & \textit{0.8609} & \textbf{0.7974} & \textbf{0.7312} & \textit{0.6585} & \textit{0.5970} & \textit{0.5325} \\
        \texttt{stabODEnet\_d} & \textit{0.9189} & \textit{0.8609} & 0.7927 & \textit{0.7246} & \textbf{0.6650} & \textbf{0.5978} & \textbf{0.5338} \\
        \midrule
        \texttt{KangODEnet} & 0.9170 & 0.8162 & 0.6857 & 0.5520 & 0.4339 & 0.3384 & 0.2627 \\
        \texttt{nsdODEnet} & 0.9071 & 0.8546 & 0.7873 & 0.7106 & 0.6312 & 0.5532 & 0.4786 \\
        \texttt{shiftODEnet} & 0.9157 & \textbf{0.8616} & \textit{0.7928} & 0.7148 & 0.6343 & 0.5557 & 0.4817 \\
        \bottomrule
        \toprule
        $\eta$ ($\ell_2$-norm) & 0 & 0.07 & 0.14 & 0.21 & 0.28 & 0.35 & 0.42 \\
        \midrule
        \texttt{ODEnet} & 0.9173 & 0.8158 & 0.6837 & 0.5525 & 0.4353 & 0.3394 & 0.2641 \\
        \midrule
        \texttt{stabODEnet} & \textbf{0.9218} & \textit{0.8551} & \textbf{0.7786} & \textbf{0.7086} & \textit{0.6298} & \textit{0.5585} & \textbf{0.4885} \\
        \texttt{stabODEnet\_d} & \textit{0.9189} & 0.8534 & 0.7782 & \textit{0.7009} & \textbf{0.6326} & \textbf{0.5587} & \textit{0.4884} \\
        \midrule
        \texttt{KangODEnet} & 0.9170 & 0.8081 & 0.6686 & 0.5293 & 0.4102 & 0.3158 & 0.2423 \\
        \texttt{nsdODEnet} & 0.9071 & 0.8486 & 0.7724 & 0.6872 & 0.6002 & 0.5165 & 0.4392 \\
        \texttt{shiftODEnet} & 0.9157 & \textbf{0.8555} & \textit{0.7785} & 0.6928 & 0.6049 & 0.5200 & 0.4453 \\
        \bottomrule
    \end{tabular}}
    \caption{SVHN dataset. Comparison of test accuracy across different models as a function of the magnitude $\eta$ of the FGSM and FGM adversarial attacks. In bold and in italic the best accuracy and the second best accuracy for each perturbation size $\eta$, respectively.}
    \label{tab:SVHN}
\end{table}

\section{Conclusions} \label{sec:conc}
We have proposed a method to enhance the stability of a neural ordinary differential equation (neural ODE) by means of a control of the maximum error growth, subsequent to a perturbation of the initial value. Since it is known that the bound depends on the logarithmic norm of the Jacobian matrix associated with the neural ODE, we have tuned this parameter by suitably perturbing the weight matrices of the neural ODE by a smallest possible perturbation (in Frobenius norm). We have done so by engaging an eigenvalue optimisation problem, for which we have proposed a nested two-level algorithm. For a given perturbation size of the weight matrix, the inner level computes optimal perturbations of that size, while - at the outer level - we tune the perturbation amplitude until we reach the desired uniform stability bound.

We have embedded the proposed algorithm in the training of the neural ODE to improve its robustness to perturbations to the initial value, which might be due simply to noisy data but also to adversarial attacks to the neural classifier. Numerical experiments on MNIST, FashionMNIST, CIFAR10 and SVHN datasets have shown that an image classifier including a neural ODE in its architecture trained according to our strategy is more stable than the same classifier trained in the classical way, and therefore it is more robust and less vulnerable to adversarial attacks. We have eventually validated our method through a range of numerical experiments against existing baseline methods to stabilise neural ODEs, demonstrating that our approach effectively improves robustness while preserving competitive accuracy, outperforming existing approaches.


\acks{N.G.\ acknowledges that his research was supported by funds from the Italian MUR (Ministero dell'Universit\`a e della Ricerca) within the PRIN 2022 Project ``Advanced numerical methods for time dependent parametric partial differential equations with applications'' and the PRO3 joint project entitled ``Calcolo scientifico per le scienze naturali, sociali e applicazioni: sviluppo metodologico e tecnologico''. N.G. and F.T. acknowledge support from MUR-PRO3 grant STANDS and PRIN-PNRR grant FIN4GEO. Part of this work was done when S.S. was a PhD student affiliated to the Division of Mathematics at Gran Sasso Science Institute, L'Aquila, Italy. The authors are members of the INdAM-GNCS (Gruppo Nazionale di Calcolo Scientifico).}



\vskip 0.2in


\begin{thebibliography}{44}
\providecommand{\natexlab}[1]{#1}
\providecommand{\url}[1]{\texttt{#1}}
\expandafter\ifx\csname urlstyle\endcsname\relax
  \providecommand{\doi}[1]{doi: #1}\else
  \providecommand{\doi}{doi: \begingroup \urlstyle{rm}\Url}\fi

\bibitem[Avron and Simon(1978)]{avron1978analytic}
J.E. Avron and B.~Simon.
\newblock Analytic {P}roperties of {B}and {F}unctions.
\newblock \emph{Annals of Physics}, 110\penalty0 (1):\penalty0 85--101, 1978.

\bibitem[Biggio et~al.(2013)Biggio, Corona, Maiorca, Nelson, {\v{S}}rndi{\'c},
  Laskov, Giacinto, and Roli]{biggio2013evasion}
B.~Biggio, I.~Corona, D.~Maiorca, B.~Nelson, N.~{\v{S}}rndi{\'c}, P.~Laskov,
  G.~Giacinto, and F.~Roli.
\newblock Evasion {A}ttacks against {M}achine {L}earning at {T}est {T}ime.
\newblock In \emph{Machine Learning and Knowledge Discovery in Databases},
  2013.

\bibitem[Caldelli et~al.(2022)Caldelli, Carrara, and
  Falchi]{caldelli2022tuning}
R.~Caldelli, F.~Carrara, and F.~Falchi.
\newblock Tuning {N}eural {ODE} {N}etworks to {I}ncrease {A}dversarial
  {R}obustness in {I}mage {F}orensics.
\newblock In \emph{IEEE International Conference on Image Processing}, 2022.

\bibitem[Carrara et~al.(2019)Carrara, Caldelli, Falchi, and
  Amato]{carrara2019robustness}
F.~Carrara, R.~Caldelli, F.~Falchi, and G.~Amato.
\newblock On the {R}obustness to {A}dversarial {E}xamples of {N}eural {ODE}
  {I}mage {C}lassifiers.
\newblock In \emph{IEEE International Workshop on Information Forensics and
  Security}, 2019.

\bibitem[Carrara et~al.(2021)Carrara, Caldelli, Falchi, and
  Amato]{carrara2021defending}
F.~Carrara, R.~Caldelli, F.~Falchi, and G.~Amato.
\newblock Defending {N}eural {ODE} {I}mage {C}lassifiers from {A}dversarial
  {A}ttacks with {T}olerance {R}andomization.
\newblock In \emph{International Conference on Pattern Recognition}, 2021.

\bibitem[Carrara et~al.(2022)Carrara, Caldelli, Falchi, and
  Amato]{carrara2022improving}
F.~Carrara, R.~Caldelli, F.~Falchi, and G.~Amato.
\newblock Improving the {A}dversarial {R}obustness of {N}eural {ODE} {I}mage
  {C}lassifiers by {T}uning the {T}olerance {P}arameter.
\newblock \emph{Information}, 13\penalty0 (12):\penalty0 555, 2022.

\bibitem[Celledoni et~al.(2021)Celledoni, Ehrhardt, Etmann, McLachlan, Owren,
  Schonlieb, and Sherry]{celledoni2021structure}
E.~Celledoni, M.~J. Ehrhardt, C.~Etmann, R.~I. McLachlan, B.~Owren, C.~B.
  Schonlieb, and F.~Sherry.
\newblock Structure-preserving deep learning.
\newblock \emph{European Journal of Applied Mathematics}, 32\penalty0
  (5):\penalty0 888--936, 2021.

\bibitem[Celledoni et~al.(2023)Celledoni, Murari, Owren, Sch{\"o}nlieb, and
  Sherry]{celledoni2023dynamical}
E.~Celledoni, D.~Murari, B.~Owren, C.-B. Sch{\"o}nlieb, and F.~Sherry.
\newblock Dynamical {S}ystems--{B}ased {N}eural {N}etworks.
\newblock \emph{SIAM Journal on Scientific Computing}, 45\penalty0
  (6):\penalty0 A3071--A3094, 2023.

\bibitem[Chamberlain et~al.(2021)Chamberlain, Rowbottom, Gorinova, Bronstein,
  Webb, and Rossi]{chamberlain2021grand}
Ben Chamberlain, James Rowbottom, Maria~I Gorinova, Michael Bronstein, Stefan
  Webb, and Emanuele Rossi.
\newblock {GRAND}: {G}raph {N}eural {D}iffusion.
\newblock In \emph{International Conference on Machine Learning}, pages
  1407--1418. PMLR, 2021.

\bibitem[Chen et~al.(2018)Chen, Rubanova, Bettencourt, and
  Duvenaud]{chen2018neural}
R.~T.~Q. Chen, Y.~Rubanova, J.~Bettencourt, and D.~K. Duvenaud.
\newblock Neural {O}rdinary {D}ifferential {E}quations.
\newblock In \emph{Advances in Neural Information Processing Systems}, 2018.

\bibitem[Cui et~al.(2023)Cui, Zhang, Chu, Hu, and Li]{cui2023robustness}
W.~Cui, H.~Zhang, H.~Chu, P.~Hu, and Y.~Li.
\newblock On robustness of neural {ODE}s image classifiers.
\newblock \emph{Information Sciences}, 632:\penalty0 576--593, 2023.

\bibitem[Dahlquist and Jeltsch(1979)]{dahlquist1979generalized}
G.~Dahlquist and R.~Jeltsch.
\newblock Generalized disks of contractivity for explicit and implicit
  {R}unge-{K}utta methods.
\newblock Technical report, CM-P00069451, 1979.

\bibitem[Deng(2012)]{deng2012mnist}
L.~Deng.
\newblock The {MNIST} database of handwritten digit images for machine learning
  research.
\newblock \emph{IEEE Signal Processing Magazine}, 29\penalty0 (6):\penalty0
  141--142, 2012.

\bibitem[Ee(2017)]{weinan2017proposal}
W.~Ee.
\newblock A proposal on machine learning via dynamical systems.
\newblock \emph{Communications in Mathematics and Statistics}, 1\penalty0
  (5):\penalty0 1--11, 2017.

\bibitem[Goodfellow et~al.(2015)Goodfellow, Shlens, and
  Szegedy]{goodfellow2014explaining}
I.~J. Goodfellow, J.~Shlens, and C.~Szegedy.
\newblock Explaining and {H}arnessing {A}dversarial {E}xamples.
\newblock In \emph{International Conference on Learning Representations}, 2015.

\bibitem[Gottschling et~al.(2025)Gottschling, Antun, Hansen, and
  Adcock]{gottschling2020troublesome}
Nina~M. Gottschling, Vegard Antun, Anders~C. Hansen, and Ben Adcock.
\newblock The {T}roublesome {K}ernel: {O}n {H}allucinations, {N}o {F}ree
  {L}unches, and the {A}ccuracy-{S}tability {T}radeoff in {I}nverse {P}roblems.
\newblock \emph{SIAM Review}, 67\penalty0 (1):\penalty0 73--104, 2025.

\bibitem[Gu and Dao(2023)]{gu2023mamba}
Albert Gu and Tri Dao.
\newblock Mamba: {L}inear-{T}ime {S}equence {M}odeling with {S}elective {S}tate
  {S}paces.
\newblock \emph{arXiv preprint arXiv:2312.00752}, 2023.

\bibitem[Gu et~al.(2020)Gu, Dao, Ermon, Rudra, and R{\'e}]{gu2020hippo}
Albert Gu, Tri Dao, Stefano Ermon, Atri Rudra, and Christopher R{\'e}.
\newblock Hi{PPO}: {R}ecurrent {M}emory with {O}ptimal {P}olynomial
  {P}rojections.
\newblock In \emph{Advances in Neural Information Processing Systems},
  volume~33, 2020.

\bibitem[Guglielmi and Lubich(2017)]{guglielmi2017matrix}
N.~Guglielmi and C.~Lubich.
\newblock Matrix {S}tabilization {U}sing {D}ifferential {E}quations.
\newblock \emph{SIAM Journal on Numerical Analysis}, 55\penalty0 (6):\penalty0
  3097--3119, 2017.

\bibitem[Guglielmi and Lubich(2025)]{guglielmi2025matrix}
N.~Guglielmi and C.~Lubich.
\newblock Matrix nearness problems and eigenvalue optimization.
\newblock \emph{arXiv preprint arXiv:2503.14750}, 2025.

\bibitem[Guglielmi and Sicilia(2024{\natexlab{a}})]{guglielmi2024low}
N.~Guglielmi and S.~Sicilia.
\newblock A low-rank {ODE} for spectral clustering stability.
\newblock \emph{Linear Algebra and its Applications}, 2024{\natexlab{a}}.

\bibitem[Guglielmi and Sicilia(2024{\natexlab{b}})]{guglielmi2024stabilization}
N.~Guglielmi and S.~Sicilia.
\newblock Stabilization of a matrix via a low-rank-adaptive {ODE}.
\newblock \emph{BIT Numerical Mathematics}, 64\penalty0 (4):\penalty0 38,
  2024{\natexlab{b}}.

\bibitem[Guglielmi et~al.(2023)Guglielmi, Lubich, and
  Sicilia]{guglielmi2023rank}
N.~Guglielmi, C.~Lubich, and S.~Sicilia.
\newblock Rank-1 {M}atrix {D}ifferential {E}quations for {S}tructured
  {E}igenvalue {O}ptimization.
\newblock \emph{SIAM Journal on Numerical Analysis}, 61\penalty0 (4):\penalty0
  1737--1762, 2023.

\bibitem[Guglielmi et~al.(2025)Guglielmi, De~Marinis, Savostianov, and
  Tudisco]{guglielmi2024contractivity}
N.~Guglielmi, A.~De~Marinis, A.~Savostianov, and F.~Tudisco.
\newblock Contractivity of neural {ODE}s: an eigenvalue optimization problem.
\newblock \emph{Mathematics of Computation}, 2025.

\bibitem[Haber and Ruthotto(2017)]{haber2017stable}
E.~Haber and L.~Ruthotto.
\newblock Stable architectures for deep neural networks.
\newblock \emph{Inverse problems}, 34\penalty0 (1):\penalty0 014004, 2017.

\bibitem[He et~al.(2016)He, Zhang, Ren, and Sun]{ResNet}
K.~He, X.~Zhang, S.~Ren, and J.~Sun.
\newblock Deep {R}esidual {L}earning for {I}mage {R}ecognition.
\newblock In \emph{IEEE Conference on Computer Vision and Pattern Recognition},
  pages 770--778, 2016.

\bibitem[Huang et~al.(2022)Huang, Yu, Zhang, Ma, and Yao]{huang2022adversarial}
Y.~Huang, Y.~Yu, H.~Zhang, Y.~Ma, and Y.~Yao.
\newblock Adversarial {R}obustness of {S}tabilized {N}eural {ODE} {M}ight {B}e
  {F}rom {O}bfuscated {G}radients.
\newblock In \emph{Mathematical and Scientific Machine Learning}, 2022.

\bibitem[Kang et~al.(2021)Kang, Song, Ding, and Tay]{kang2021stable}
Q.~Kang, Y.~Song, Q.~Ding, and W.P. Tay.
\newblock Stable {N}eural {ODE} with {L}yapunov-{S}table {E}quilibrium {P}oints
  for {D}efending against {A}dversarial {A}ttacks.
\newblock In \emph{Advances in Neural Information Processing Systems}, 2021.

\bibitem[Karras et~al.(2022)Karras, Aittala, Aila, and
  Laine]{karras2022elucidating}
Tero Karras, Miika Aittala, Timo Aila, and Samuli Laine.
\newblock Elucidating the {D}esign {S}pace of {D}iffusion-{B}ased {G}enerative
  {M}odels.
\newblock In \emph{Advances in Neural Information Processing Systems},
  volume~35, 2022.

\bibitem[Kato(2013)]{Kato2013}
T.~Kato.
\newblock \emph{Perturbation {T}heory for {L}inear {O}perators}.
\newblock Springer, 2013.

\bibitem[Krizhevsky(2009)]{krizhevsky2009learning}
A.~Krizhevsky.
\newblock Learning {M}ultiple {L}ayers of {F}eatures from {T}iny {I}mages.
\newblock Technical report, Toronto, ON, Canada, 2009.

\bibitem[Li et~al.(2020)Li, He, and Lin]{li2020implicit}
M.~Li, L.~He, and Z.~Lin.
\newblock Implicit {E}uler {S}kip {C}onnections: {E}nhancing {A}dversarial
  {R}obustness via {N}umerical {S}tability.
\newblock In \emph{International Conference on Machine Learning}, 2020.

\bibitem[Li et~al.(2022)Li, Xin, and Liu]{li2022defending}
X.~Li, Z.~Xin, and W.~Liu.
\newblock Defending {A}gainst {A}dversarial {A}ttacks via {N}eural {D}ynamic
  {S}ystem.
\newblock In \emph{Advances in Neural Information Processing Systems},
  volume~35, 2022.

\bibitem[Lu et~al.(2018)Lu, Zhong, Li, and Dong]{lu2018beyond}
Yiping Lu, Aoxiao Zhong, Quanzheng Li, and Bin Dong.
\newblock Beyond {F}inite {L}ayer {N}eural {N}etworks: {B}ridging {D}eep
  {A}rchitectures and {N}umerical {D}ifferential {E}quations.
\newblock In \emph{International Conference on Machine Learning}, pages
  3276--3285. PMLR, 2018.

\bibitem[Netzer et~al.(2011)Netzer, Wang, Coates, Bissacco, Wu, and
  Ng]{netzer2011reading}
Y.~Netzer, T.~Wang, A.~Coates, A.~Bissacco, B.~Wu, and A.~Y. Ng.
\newblock Reading {D}igits in {N}atural {I}mages with {U}nsupervised {F}eature
  {L}earning.
\newblock In \emph{NIPS Workshop on Deep Learning and Unsupervised Feature
  Learning}, 2011.

\bibitem[Nguyen et~al.(2024)Nguyen, Honda, Sano, Nguyen, Nakamura, and
  Nguyen]{nguyen2024coupled}
Tuan Nguyen, Hirotada Honda, Takashi Sano, Vinh Nguyen, Shugo Nakamura, and
  Tan~Minh Nguyen.
\newblock From {C}oupled {O}scillators to {G}raph {N}eural {N}etworks:
  {R}educing {O}ver-smoothing via a {K}uramoto {M}odel-based {A}pproach.
\newblock In \emph{International Conference on Artificial Intelligence and
  Statistics}, pages 2710--2718. PMLR, 2024.

\bibitem[Purohit(2023)]{purohit2023ortho}
V.~Purohit.
\newblock Ortho-{ODE}: {E}nhancing {R}obustness of {N}eural {ODE}s against
  {A}dversarial {A}ttacks.
\newblock \emph{arXiv preprint arXiv:2305.09179}, 2023.

\bibitem[Rusch and Mishra(2021)]{rusch2021coupled}
T~Konstantin Rusch and Siddhartha Mishra.
\newblock Coupled {O}scillatory {R}ecurrent {N}eural {N}etwork {(coRNN)}: An
  accurate and (gradient) stable architecture for learning long time
  dependencies.
\newblock In \emph{International Conference on Learning Representations}, 2021.

\bibitem[Ruthotto and Haber(2020)]{ruthotto2020deep}
L.~Ruthotto and E.~Haber.
\newblock Deep {N}eural {N}etworks {M}otivated by {P}artial {D}ifferential
  {E}quations.
\newblock \emph{Journal of Mathematical Imaging and Vision}, 62\penalty0
  (3):\penalty0 352--364, 2020.

\bibitem[Sherry et~al.(2024)Sherry, Celledoni, Ehrhardt, Murari, Owren, and
  Sch{\"o}nlieb]{sherry2024designing}
F.~Sherry, E.~Celledoni, M.~J. Ehrhardt, D.~Murari, B.~Owren, and C.-B.
  Sch{\"o}nlieb.
\newblock Designing stable neural networks using convex analysis and {ODE}s.
\newblock \emph{Physica D: Nonlinear Phenomena}, 463:\penalty0 134--159, 2024.

\bibitem[Song et~al.(2021)Song, Sohl-Dickstein, Kingma, Kumar, Ermon, and
  Poole]{song2021score}
Yang Song, Jascha Sohl-Dickstein, Diederik~P Kingma, Abhishek Kumar, Stefano
  Ermon, and Ben Poole.
\newblock Score-{B}ased {G}enerative {M}odeling through {S}tochastic
  {D}ifferential {E}quations.
\newblock In \emph{International Conference on Learning Representations}, 2021.

\bibitem[Szegedy et~al.(2014)Szegedy, Zaremba, Sutskever, Bruna, Erhan,
  Goodfellow, and Fergus]{szegedy2013intriguing}
C.~Szegedy, W.~Zaremba, I.~Sutskever, J.~Bruna, D.~Erhan, I.~Goodfellow, and
  R.~Fergus.
\newblock Intriguing {P}roperties of {N}eural {N}etworks.
\newblock In \emph{International Conference on Learning Representations}, 2014.

\bibitem[Xiao et~al.(2017)Xiao, Rasul, and Vollgraf]{xiao2017fashion}
H.~Xiao, K.~Rasul, and R.~Vollgraf.
\newblock Fashion-{MNIST}: a novel image dataset for benchmarking machine
  learning algorithms.
\newblock \emph{arXiv preprint arXiv:1708.07747}, 2017.

\bibitem[Yan et~al.(2020)Yan, Du, Tan, and Feng]{yan2020robustness}
H.~Yan, J.~Du, V.Y.F. Tan, and J.~Feng.
\newblock On {R}obustness of {N}eural {O}rdinary {D}ifferential {E}quations.
\newblock In \emph{International Conference on Learning Representations}, 2020.

\end{thebibliography}
\end{document}